\documentclass[11pt,reqno,letterpaper]{amsart}

\usepackage{booktabs}
\usepackage{color}
\usepackage[colorlinks=true, allcolors=blue,backref=page,hypertexnames=false]{hyperref}
\usepackage{amsmath, amssymb, amsthm}
\usepackage{mathrsfs}
\usepackage{mathtools}
\usepackage[noabbrev,capitalize,nameinlink]{cleveref}
\usepackage[noadjust]{cite}
\usepackage{graphics}
\usepackage{pifont}
\usepackage{tikz}
\usepackage{bbm}
\usepackage{thm-restate}
\usepackage[T1]{fontenc}

\usetikzlibrary{arrows.meta}

\usepackage{environ}
\usepackage{framed}
\usepackage{url}
\usepackage[linesnumbered,ruled,vlined]{algorithm2e}
\usepackage[noend]{algpseudocode}
\usepackage[labelfont=bf]{caption}
\usepackage[framemethod=tikz]{mdframed}
\usepackage{appendix}
\usepackage{graphicx}
\usepackage[textsize=tiny]{todonotes}
\usepackage{tcolorbox}
\usepackage{enumerate}
\allowdisplaybreaks[1]
\usepackage{stmaryrd}

\usepackage[margin=1in]{geometry}

\usepackage[shortlabels]{enumitem}
\crefformat{enumi}{#2#1#3}
\crefrangeformat{enumi}{#3#1#4 to~#5#2#6}
\crefmultiformat{enumi}{#2#1#3}
{ and~#2#1#3}{, #2#1#3}{ and~#2#1#3}

\crefname{algocf}{Algorithm}{Algorithms}

\crefname{equation}{}{} 
\AtBeginEnvironment{appendices}{\crefalias{section}{appendix}} 

\usepackage[color,final]{showkeys} 

\colorlet{refkey}{orange!20}
\colorlet{labelkey}{blue!30}

\newcommand{\N}{\mathbb{N}}

\numberwithin{equation}{section}
\newtheorem{theorem}{Theorem}[section]
\newtheorem{proposition}[theorem]{Proposition}
\newtheorem{lemma}[theorem]{Lemma}

\newtheorem{observation}[theorem]{Observation}
\crefname{claim}{Claim}{Claims}

\newtheorem{conjecture}[theorem]{Conjecture}
\newtheorem*{question*}{Question}

\theoremstyle{definition}
\newtheorem{definition}[theorem]{Definition}

\newtheorem*{definition*}{Definition}
\newtheorem{example}[theorem]{Example}

\newtheorem{remark}[theorem]{Remark}

\newcommand{\paren}[1]{\left(#1\right)}

\newcommand{\mb}{\mathbb}
\newcommand{\mbf}{\mathbf}

\newcommand{\mc}{\mathcal}

\let\originalleft\left
\let\originalright\right
\renewcommand{\left}{\mathopen{}\mathclose\bgroup\originalleft}
\renewcommand{\right}{\aftergroup\egroup\originalright}

\allowdisplaybreaks

\newcommand{\ignore}[1]{}

\DeclareMathOperator{\RMMS}{RMMS}
\DeclareMathOperator{\MMS}{MMS}

\DeclareMathOperator{\Bin}{Bin}
\newcommand{\V}{\mathcal V}
\newcommand{\Vzeroone}{\mathcal V_{01}}
\newcommand{\ind}{\mathbf 1}

\newcommand{\PP}{\mathbb P}

\title{Thinned Quantile Shares are Universally Feasible}

\author{Vishesh Jain}
\address{Department of Mathematics, Statistics, and Computer Science, University of Illinois Chicago, Chicago, IL 60607, USA}
\email{visheshj@uic.edu}

\author{Clayton Mizgerd}
\address{Department of Mathematics, Statistics, and Computer Science, University of Illinois Chicago, Chicago, IL 60607, USA}
\email{cmizge2@uic.edu}

\author{Shyam Ravichandran}
\address{Department of Mathematics, Statistics, and Computer Science, University of Illinois Chicago, Chicago, IL 60607, USA}
\email{sravi42@uic.edu}

\begin{document}

\begin{abstract}
Quantile shares, introduced by Babichenko, Feldman, Holzman, and Narayan~[STOC 2024], offer an ordinal, self-maximizing, and interpretable benchmark for fair division of indivisible goods, but their universal feasibility is known only conditional on the rainbow Erd\H{o}s matching conjecture (EMC). Specifically, Babichenko et al.~showed that assuming the rainbow EMC in the near-perfect matching regime, the $(1/2e)$-quantile share is universally feasible. In contrast, a simple argument shows that the $q$-quantile share can be infeasible for any $q > 1/e$. 

We introduce a one-parameter refinement of quantile shares, the \emph{$c$-thinned quantile share}, obtained by thinning the inclusion probability in the random benchmark bundle by a factor of $c$ for a fixed constant $c\in(0,1]$. Our main result is that there exists a universal constant $c >0$ for which the $c$-thinned $e^{-c}$-quantile share is unconditionally universally feasible; this is best possible in the sense that for any $c \in (0,1]$, the $c$-thinned $q$-quantile share can be infeasible for any $q > e^{-c}$. Prior to this work, the only nontrivial share known to be universally feasible was Feige's residual maximin share. 

The thinning viewpoint also lets us remove the factor-two loss in the conditional result for the original quantile share: assuming the rainbow EMC, the $(1/e)$-quantile share is universally feasible. 
\end{abstract}

\maketitle

\section{Introduction}

Fair division concerns the allocation of goods among agents in a fair manner. For indivisible goods, two broad approaches have been especially influential. In the \emph{envy-based} approach, fairness is defined by comparing an agent's bundle to the bundles received by others. An allocation is \emph{envy-free} if no agent prefers another agent's bundle to her own; because exact envy-freeness often fails for indivisible goods, much of the literature studies relaxations such as EF1 and EFX; we refer the reader to the survey \cite{AzizLMW22}. A second major approach is \emph{share-based} fairness \cite{BF}, in which each agent is assigned a benchmark value depending only on her own valuation and the number of agents, and an allocation is fair if every agent receives at least her benchmark. This paper belongs to the latter tradition.

We begin by fixing the basic model and terminology. Let $[m]=\{1,\dots,m\}$ be a set of indivisible goods and let $n\ge 2$ be the number of agents. A \emph{valuation} is a function
\[
v:2^{[m]}\to \mathbb{R}_{\ge 0}.
\]
A valuation is said to be \emph{monotone} if
 $v(\emptyset)=0$ and $v(A)\le v(B)$ whenever $A\subseteq B$. Let $\mathcal{V}$ denote the class of all such monotone valuations, and let $\mathcal{V}_{01}\subseteq \mathcal{V}$ denote the subclass of monotone $0/1$-valuations (that is, monotone valuations taking values in $\{0,1\}$). An \emph{allocation} is an $n$-tuple $(S_1,\dots,S_n)$ of pairwise disjoint bundles whose union is $[m]$.

\begin{definition}
A \emph{share} is a rule $\tau$ assigning a nonnegative real number $\tau(v,n)$ to every pair $(v,n)$ consisting of a valuation and a number of agents. An allocation $(S_1,\dots,S_n)$ is \emph{fair} for a valuation profile $(v_1,\dots,v_n)$ with respect to $\tau$ if
\[
v_i(S_i)\ge \tau(v_i,n)\qquad\text{for every }i\in[n].
\]
A share $\tau$ is \emph{feasible} for a class $\mathcal{U}$ of valuations if every profile of valuations in $\mathcal{U}$ admits a fair allocation, and it is \emph{universally feasible} if it is feasible for all monotone valuations, i.e., for $\mathcal{U}=\mathcal{V}$.
\end{definition}

Two of the most classical shares are the \emph{proportional share} \cite{Steinhaus48} and the \emph{maximin share} \cite{Budish11}:
\[
\operatorname{PS}(v,n):=\frac{v([m])}{n},
\qquad
\operatorname{MMS}(v,n):=\max_{(P_1,\dots,P_n)} \min_{j\in[n]} v(P_j),
\]
where the maximum is taken over all partitions $(P_1,\dots,P_n)$ of $[m]$ into $n$ bundles. Proportionality is the benchmark suggested by exact divisibility, while MMS asks what an agent can guarantee by partitioning the goods into $n$ bundles however she wants and then receiving the least desirable bundle.

These shares are natural, but they are not universally feasible. The proportional share is not feasible in general indivisible-goods settings, and MMS is not feasible even for additive valuations~\cite{KPW18}. This has led to a large literature on constant-factor approximations to MMS and related shares. For instance, one now knows that a $(3/4+3/3836)$-fraction of MMS is feasible for additive valuations \cite{AG24}; for submodular and other richer valuation classes, constant-factor approximations are also known~\cite{BK20,GHSY22}. However, for general monotone valuations this route cannot succeed: Babichenko, Feldman, Holzman, and Narayan~\cite{BFHN} show that no constant fraction of the proportional or MMS share is universally feasible. Thus, if one seeks a universally feasible share for arbitrary monotone valuations, scaling down the proportional or MMS benchmarks is not enough; one needs a different share method.

\medskip

More recently, Babaioff and Feige~\cite{BF} developed a general framework for share-based fairness and identified several structural desiderata for shares. Among them are \emph{realizability} (the share should never exceed the value of the whole set of goods), \emph{invariance under renaming goods}, and \emph{self-maximization}, meaning that truthful reporting maximizes the worst-case true value of a bundle deemed acceptable under the reported valuation. Within this framework, two especially promising candidates have emerged for universal feasibility: the \emph{quantile-share} family introduced by Babichenko et al.~\cite{BFHN}, and the \emph{residual maximin share (RMMS)} introduced by Feige~\cite{FeigeRMMS}. We remark that apart from not being universally feasible, neither the proportional share nor any constant $\rho < 1$ fraction of MMS is self-maximizing \cite{BF}.

\medskip

The starting point of the present paper is the quantile-share paradigm of~\cite{BFHN}. Let $X\subseteq [m]$ denote the random bundle obtained by including each good independently with probability $1/n$. The \emph{$q$-quantile share} of an agent with valuation $v$ is the $q$-quantile of the random value $v(X)$; equivalently,
\[
\tau_q(v,n):=\min\bigl\{t\in v(2^{[m]}): \mathbb{P}[v(X)\le t]\ge q\bigr\}.
\]
Thus a bundle $S$ is $q$-fair if it is at least as good as the random benchmark bundle with probability at least $q$. Quantile shares have several appealing features: they are \emph{ordinal}, in the sense that they depend only on the induced ranking of bundles rather than on the cardinal scale of utilities, and they are self-maximizing~\cite{BFHN}. Moreover, compared to notions such as MMS, which are NP-hard to approximate to any factor, even on instances where they are feasible, quantile shares can be approximated using Monte Carlo methods. We remark that quantile shares may be viewed as an indivisibility-aware analogue of proportionality: for additive valuations, replacing the quantile by expectation recovers the proportional share.

While quantile-type criteria had appeared earlier in certain two-agent allocation settings (see the discussion in \cite{BFHN}), Babichenko et al.~\cite{BFHN} introduced the $q$-quantile share as a share notion in the modern fair-division framework and initiated its systematic study for monotone valuations, with the explicit goal of finding a nontrivial universally feasible share. Their main result establishes a striking connection to extremal set theory: if a rainbow version of the celebrated Erd\H{o}s Matching Conjecture holds, then the quantile share is universally feasible at level $q=1/(2e)$. We remark that the Erd\H{o}s Matching Conjecture is one of the central open problems in extremal set theory, and is also known only in partial regimes; we return to this topic in \cref{sec:rainbow}. Babichenko et al.\ also prove that the $q>1/e$-quantile is not universally feasible. They further obtain unconditional constant-$q$ feasibility results for several restricted classes, including additive, unit-demand, and matroid-rank valuations. Thus their work leaves open both the main unconditional question---whether some constant value of $q$ yields universal feasibility using only known results---and, more specifically, whether the factor-two gap between the conditional value $1/(2e)$ and the upper barrier $1/e$ can be closed \cite[Section~5.1]{BFHN}. 

\subsection{Our results}

We show that the quantile-share paradigm admits an \emph{unconditional} universally feasible share, after a conceptually simple modification of the benchmark. Rather than comparing an agent's allocation to the usual random bundle in which each good is included independently with probability $1/n$, we compare it to a \emph{thinned} random bundle; see \cref{sec:thinning-motivation} for motivation.

\begin{definition}
Fix $c\in(0,1]$. The \emph{$c$-thinned random bundle} $X^{(c)}\subseteq [m]$ is obtained by including each good independently with probability $c/n$. 
\end{definition}

\begin{definition}
Fix $c\in(0,1]$, $q\in(0,1)$, and $v\in\mathcal{V}$. The \emph{$c$-thinned $q$-quantile share} is
\[
\tau_q^{(c)}(v,n):=\min\bigl\{t\in v(2^{[m]}): \mathbb{P}[v(X^{(c)})\le t]\ge q\bigr\}.
\]
When $c=1$, this is exactly the original quantile share $\tau_q(v,n)$ of~\cite{BFHN}.
\end{definition}
\begin{remark}
For additive valuations, the random bundle $X^{(c)}$ has expected value
\[
\mathbb{E}[v(X^{(c)})]=c\,\operatorname{PS}(v,n),
\]
so the thinned benchmark may be viewed as a quantile analogue of $c$ times proportional share.
\end{remark}

Our first main result is an unconditional universal-feasibility theorem.

\begin{theorem}\label{thm:main}
There exists a universal constant $c >0$ such that the $c$-thinned $e^{-c}$-quantile share is universally feasible. 
\end{theorem}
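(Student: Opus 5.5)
The plan is to reduce to a statement about monotone $0/1$ valuations, i.e.\ up-closed families of bundles, and then prove a biased-measure rainbow matching statement unconditionally in the regime of small inclusion probability.

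\textbf{Reduction to up-sets.} Fix a profile $(v_1,\dots,v_n)$ and let $t_i=\tau^{(c)}_{e^{-c}}(v_i,n)$. Let $\mathcal F_i=\{S\subseteq[m]: v_i(S)\ge t_i\}$. By monotonicity of $v_i$, each $\mathcal F_i$ is an up-set. By minimality in the definition of the share, every value $t<t_i$ in $v_i(2^{[m]})$ satisfies $\mathbb P[v_i(X^{(c)})\le t]<e^{-c}$, so
\[
\mu_{c/n}(\mathcal F_i)=\mathbb P[X^{(c)}\in\mathcal F_i]>1-e^{-c},
\]
where $\mu_p$ denotes the $p$-biased product measure. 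A fair allocation exists as soon as there are pairwise disjoint $S_i\in\mathcal F_i$: any leftover goods can be added to $S_1$, by monotonicity. So it suffices to prove the following.

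\emph{Key lemma.} For a suitable absolute constant $c>0$ and every $n$, any up-sets $\mathcal F_1,\dots,\mathcal F_n\subseteq 2^{[m]}$ with $\mu_{c/n}(\mathcal F_i)>1-e^{-c}$ admit a rainbow matching.

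This is a measure version of the rainbow Erd\H{o}s matching conjecture. Here $p=c/n$ is small, corresponding to $k\approx cm/n$, i.e.\ the regime where the number of sets $s=n$ is a large multiple of the expected set size ratio. This is exactly where EMC-type statements are known unconditionally (Frankl; Frankl--Kupavskii).

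\textbf{Proof of the key lemma.} I would follow the Frankl--Kupavskii scheme in the biased setting.
\begin{enumerate}
\item Assume no rainbow matching exists. Apply simultaneous left-shifting to all $\mathcal F_i$; this preserves the measures, up-closedness, and the absence of a rainbow matching, so all families may be taken shifted.
\item Pass to minimal elements and split according to whether the families contain small sets.
\item If every family contains a member of size $O(1)$ inside a bounded initial segment, build the matching greedily, removing used goods as we go. Each removal costs at most $O(|S|\cdot p)$ measure in each remaining family.
\item Otherwise, use a concentration/averaging argument. Compare with the extremal configuration ``contains one of $n$ fixed goods'', whose measure is $1-(1-c/n)^n$.
\end{enumerate}
A subtlety is that $1-(1-c/n)^n$ slightly exceeds $1-e^{-c}$, so the bound cannot be matched exactly against that extremal example. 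To get slack I would sample $X$ in two rounds. Realize $X^{(c)}$ as a union of $X^{(c')}$ with $c'<c$ and an independent sparse set. Then apply the matching statement at $p'=c'/n$ with the strict inequality, after averaging over the sparse part via a Harris/FKG-type argument. Choosing $c$ small lets the error terms, which are of order $c^2$, be absorbed.

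\textbf{Main obstacle.} I expect the hardest step to be the rainbow (rather than single-family) version of the biased EMC with an absolute constant $c$ uniform in $n$ and $m$. Shifting handles symmetric arguments cleanly, but the rainbow condition prevents directly reusing the single-family bound $\mu_p(\mathcal F)\le 1-(1-p)^{s}$. My first attempt would be an induction on $n$: remove the family with the ``smallest'' minimal set, match it using a good outside a shadow set, and show that the remaining $n-1$ families restricted to the complement still have measure above $1-e^{-c(n-1)/n}$ at density $c/(n-1)$ in the appropriate rescaled sense. The bookkeeping of this rescaling is where I expect the constant $c$ to be forced small.
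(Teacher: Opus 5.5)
\textbf{There is a genuine gap: the proposal restates the theorem as a key lemma but does not prove it.} Your reduction to up-sets $\mathcal F_i$ with $\mu_{c/n}(\mathcal F_i)>1-e^{-c}$ is fine, and it is the paper's first step. But the ``key lemma'' is just the theorem rewritten for $0/1$ valuations. Steps 2 and 4 of your plan are placeholders, and step 3 fails quantitatively. Removing the goods of a chosen set $S$ can cost a remaining up-set about $|S|c/n$ of measure. After $n-1$ greedy choices the total loss is therefore roughly $c$ times the average size of the chosen sets, while each family has only about $c$ of measure to spend. So the greedy step survives only if essentially every chosen set is a singleton.

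Your ``subtlety'' rests on the wrong extremal example. Families that all meet a fixed $n$-set obviously admit a rainbow matching. The cross-dependent extremal configuration is ``meets one of $n-1$ fixed goods'', whose measure $1-(1-c/n)^{n-1}$ is strictly below $1-e^{-c}$. So there is slack from the start, and the two-round sampling fixes a non-problem.

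Finally, the Frankl--Kupavskii theorem only covers $M\ge 12nk\log(e^2n)$, so mimicking their scheme naturally gives $c$ of order $1/\log n$. A constant uniform in $n$ needs an input like Kupavskii's theorem ($M>3enk$ for $n>10^7+1$), and your plan has no counterpart for it.

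\textbf{The missing idea is that you never need a biased-measure rainbow matching theorem.} You can transfer to a single uniform layer and use the known uniform theorems as black boxes. The paper's route is as follows.
\begin{enumerate}
\item Pad with dummy goods to a huge ground set $[M]$; this leaves every share unchanged.
\item By continuity, lower $c$ to some $\gamma<c$ so that each active agent still rejects with probability $<(1-\gamma/n)^{n-1}$ under the $\gamma$-thinned bundle.
\item Take $k=\lfloor\gamma(M+M^{2/3})/n\rfloor$, so that $M\ge Ckn$ with $C=1/c$.
\item The $k$-uniform accepted families are cross-dependent. Frankl--Kupavskii (small $n$) or Kupavskii (large $n$) then gives an agent with at least $\binom{M-n+1}{k}$ rejected $k$-sets.
\item Rejected sets form a down-set. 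Kruskal--Katona in Lov\'asz's form therefore gives at least $\binom{M-n+1}{t}$ rejected $t$-sets for every $t\le k$.
\item Summing over $t$ gives rejection probability at least $(1-\gamma/n)^{n-1}\,\mathbb P[\Bin(M-n+1,\gamma/n)\le k]$. Since $k$ exceeds the mean by order $M^{2/3}$, a Chernoff bound makes this tend to $(1-\gamma/n)^{n-1}$, which is the contradiction.
\end{enumerate}
The padding is what lets one uniform layer, placed just above the mean size, control the whole product measure. This replaces your unresolved shifting and induction-on-$n$ bookkeeping entirely.
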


\begin{remark}
While we have not made an attempt to optimize constants, we note that one can take $c = 1/250$ for all $n\geq 2$, and $c = 1/10$ for all sufficiently large $n$. We state a sharper quantitative version of \cref{thm:main} in \cref{thm:quantitative}.    
\end{remark}

Interestingly, the thinning viewpoint also allows us to eliminate the factor-two gap in the conditional universal feasibility result of \cite{BFHN} for the (unthinned) quantile share.

\begin{theorem}\label{thm:oneovere}
Assume the rainbow Erd\H{o}s Matching Conjecture. Then the $q$-quantile share is universally feasible for
\[
q=\frac{1}{e}.
\]
This is best possible: no $q>1/e$ quantile share can be universally feasible.
\end{theorem}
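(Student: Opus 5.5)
The plan has two parts. The upper bound comes from a single-good example. The feasibility part reduces to monotone $0/1$ valuations and then to a rainbow matching problem, which the rainbow Erd\H{o}s Matching Conjecture settles.

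\emph{Upper bound.} Take $m=n$ goods and let every agent have the valuation $v(S)=\ind[|S|\ge 2]$. With $X\sim\Bin(n,1/n)$ in size, we have
\[
\PP[v(X)=0]=\PP[|X|\le 1]=(1-1/n)^n+(1-1/n)^{n-1}.
\]
As $n\to\infty$ this tends to $2/e>1/e$. A cleaner choice is the singleton valuation $v(S)=\ind[|S|\ge1]$ on $m$ goods with $m\to\infty$ and $n$ fixed. Then $\PP[v(X)=0]=(1-1/n)^m\to0$, so the share equals $1$ for every $q$, and any allocation with $m\ge n$ satisfies it; this example does not separate quantiles. The right example is $m=n-1$ goods, $n$ agents, and $v(S)=\ind[S\ne\emptyset]$. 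Then
\[
\PP[v(X)=0]=(1-1/n)^{n-1}\to 1/e\quad\text{from above.}
\]
For $q>1/e$ and $n$ large, $\PP[v(X)\le 0]\ge q$ fails, so the share is $1$ for every agent. But only $n-1$ goods are available, so some agent receives nothing and no fair allocation exists.

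\emph{Reduction to $0/1$ valuations.} Given a profile $(v_1,\dots,v_n)$, replace each $v_i$ by the indicator $w_i(S)=\ind[v_i(S)\ge\tau_q(v_i,n)]$. These are monotone $0/1$ valuations. By the definition of the quantile, $\PP[v_i(X)<\tau_q]<q$, so
\[
\PP[w_i(X)=1]>1-q.
\]
It therefore suffices to show the following. If $\mathcal F_1,\dots,\mathcal F_n$ are up-closed families on $[m]$, each with $\mu_{1/n}(\mathcal F_i)\ge 1-1/e$ (non-strict at the limit), then there exist pairwise disjoint $S_i\in\mathcal F_i$. Any leftover goods are then added to some bundle, which preserves membership by monotonicity.

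\emph{Rainbow matching step.} This is the main step. Negating, suppose no rainbow matching exists. I would pass to the minimal sets of each $\mathcal F_i$ and use a compression or random-partition argument: assign each good an independent uniform label in $[n]$, and let $P_j$ be the set of goods labelled $j$. Each $P_j$ is distributed as $X$. A random assignment of parts to agents succeeds if some bijection $\sigma$ has $P_{\sigma(i)}\in\mathcal F_i$. This is a Hall-type condition on a random bipartite graph in which each edge has marginal probability $>1-1/e$. That alone does not force a perfect matching; this is exactly where the rainbow EMC enters. I would apply the conjecture in the form used by \cite{BFHN}, in the near-perfect matching regime, to the $k$-uniform slices of the families, with $k$ ranging over typical sizes of order $m/n$. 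The conclusion should be that if no rainbow perfect matching exists, then some $\mathcal F_i$ has measure at most the extremal value, which is $\approx \PP[\text{a fixed good set avoided}]\to 1/e$, not $1/(2e)$.

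The factor-two loss in \cite{BFHN} comes, I expect, from a crude union over sizes, or from restricting to one slice. The thinning viewpoint suggests handling this by first proving the $c$-thinned statement at level $e^{-c}$ through the EMC extremal bound at density $c/n$. One then lets $c\uparrow1$, interpolating: a $c$-thinned bundle is a random subset of a $1$-thinned one, and monotonicity compares the quantiles. Making this limit and the slice-by-slice application of the conjecture precise is where I expect the real work to lie.
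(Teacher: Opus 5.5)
Your upper-bound example (in its final form: $m=n-1$ goods and $v(S)=\ind\{S\neq\emptyset\}$) and your reduction to monotone $0/1$ valuations are correct and agree with the paper; delete the two false starts. The feasibility half, however, has a genuine gap: the step you call the main step is never carried out.

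Your closing idea has the right shape: prove a thinned statement at density $c/n$, then let $c\uparrow 1$. But the mechanisms you place around it would not work. The random-partition/Hall argument plays no role. Applying the conjecture as in \cite{BFHN}, in the near-perfect regime $nk\approx m$, is exactly what costs the factor two. There the relevant event $|X|\le k$ is a median event of probability about $1/2$. The loss does not come from a union over sizes, and the correct argument does use a single slice.

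The missing engine is that of \cref{prop:transfer}. Pad with dummy goods to $[M]$ with $M\to\infty$ (shares are unchanged by \cref{lem:padding}). Work at density $\gamma/n$ for some $\gamma<1$, and take one layer $k=\lfloor\gamma(M+M^{2/3})/n\rfloor$. Then $nk<M$, so the conjecture applies. Its second term satisfies $\binom{kn-1}{k}\le(kn/M)^k\binom Mk$, which is negligible against $\binom Mk-\binom{M-n+1}{k}\ge\frac kM\binom Mk$. At the same time, $k$ sits $\Omega(M^{2/3})$ above the mean of $\Bin(M-n+1,\gamma/n)$.

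The accepted $k$-sets are cross-dependent, so some agent $i_*$ rejects at least $\binom{M-n+1}{k}$ of them. Rejected sets form a down-set, so Kruskal--Katona (\cref{thm:KK}) gives at least $\binom{M-n+1}{t}$ rejected $t$-sets for every $t\le k$. Summing over $t$ and applying Chernoff gives
\[
\PP\bigl[w_{i_*}(X^{(\gamma)})=0\bigr]\ \ge\ \Bigl(1-\frac{\gamma}{n}\Bigr)^{n-1}\,\PP\bigl[\Bin(M-n+1,\gamma/n)\le k\bigr]\ =\ \Bigl(1-\frac{\gamma}{n}\Bigr)^{n-1}\bigl(1-o_M(1)\bigr).
\]
Padding, the shadow bound, and placing $k$ above the thinned mean are what turn the conjecture into the bound $\approx(1-1/n)^{n-1}$, which you correctly guess as the extremal value. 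None of them appears in your proposal.

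Your passage $c\uparrow 1$ also rests on the wrong tool. Coupling $X^{(c)}\subseteq X^{(1)}$ (or \cref{prop:monotonicity-thinning}) shows that thinned shares are \emph{smaller} than the unthinned one. So feasibility of thinned shares cannot be pushed up to $\tau_{1/e}$ by monotonicity.

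What works is local constancy for a fixed counterexample. Let $\mathcal R_i=\{S:w_i(S)=0\}$. The map $a\mapsto\mu_{a/n}(\mathcal R_i)$ is a polynomial in $a$. For every agent with positive share, $\mu_{1/n}(\mathcal R_i)<1/e<(1-1/n)^{n-1}$. By continuity there is a single $\gamma<1$ with $\mu_{\gamma/n}(\mathcal R_i)<(1-\gamma/n)^{n-1}$ for all such $i$ simultaneously. So the same $0/1$ valuations still demand value $1$ under the $\gamma$-thinned benchmark. The displayed lower bound at density $\gamma/n$, for $M$ large, then contradicts this strict inequality.
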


More generally, the upper bound extends to all thinning parameters.

\begin{proposition}\label{prop:upper-bound}
For every $c\in(0,1]$, the $c$-thinned $q$-quantile share is not universally feasible for any $q>e^{-c}$.
\end{proposition}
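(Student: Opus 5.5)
We give an explicit instance of $n$ identical agents sharing a $0/1$ valuation in which no allocation can give every agent a bundle of value $1$, while the $c$-thinned $q$-quantile share of this valuation equals $1$ whenever $q>e^{-c}$. This mirrors the argument of \cite{BFHN} for $q>1/e$.

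\textbf{Construction.} Take $m$ goods and let every agent have the valuation
\[
v(S)=\ind[S\neq\emptyset].
\]
The value $v(X^{(c)})$ is $0$ exactly when $X^{(c)}$ is empty, so
\[
\PP[v(X^{(c)})=0]=(1-c/n)^m .
\]
The share is the least $t\in\{0,1\}$ with $\PP[v(X^{(c)})\le t]\ge q$. Hence $\tau^{(c)}_q(v,n)=1$ exactly when $(1-c/n)^m<q$.

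\textbf{Parameter choice.} We set $m=n-1$. Then no allocation is fair, because at most $n-1$ agents can receive a nonempty bundle. It remains to find $n$ with $(1-c/n)^{n-1}<q$. Since $(1-c/n)^{n-1}\to e^{-c}$ as $n\to\infty$ and $q>e^{-c}$, such an $n$ exists for all sufficiently large $n$.

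\textbf{The limit must be approached from the right side.} The limit alone suffices, but we can also note the direction of approach. Write
\[
\log\bigl((1-c/n)^{n-1}\bigr)=(n-1)\log(1-c/n)=-c+\frac{c}{n}-\frac{c^2}{2n}+O(n^{-2}).
\]
This is at least $-c$ for large $n$, so the sequence approaches $e^{-c}$ from above. This is why a strict inequality $q>e^{-c}$ is needed, and it is consistent with the claim being sharp.

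\textbf{Conclusion.} For every $c\in(0,1]$ and every $q>e^{-c}$, we choose $n$ large enough that $(1-c/n)^{n-1}<q$, and use $n-1$ goods with all agents holding the indicator-of-nonempty valuation. Each agent's share is then $1$, but some agent must receive the empty set, so no fair allocation exists. The only step needing care is the threshold convention: we must check that $\PP[v\le 0]<q$ forces the minimum admissible $t$ to be $1$, and this holds because $v(2^{[m]})=\{0,1\}$. The valuation is monotone and $0/1$, so the counterexample even lies in $\Vzeroone$.
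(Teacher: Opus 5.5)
Your proof is correct and matches the paper's argument: $n-1$ goods, the valuation $v(T)=\ind\{T\neq\emptyset\}$ for every agent, and $n$ chosen large enough that $(1-c/n)^{n-1}<q$, so each agent's share is $1$ while some agent must receive the empty bundle. Your side remark on approach from above is not needed, though it is true; indeed $(1-c/n)^{n-1}\ge e^{-c}$ holds for every $n$ once $c\le 1$.
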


\begin{proof}
Fix $c\in(0,1]$ and $q>e^{-c}$. Choose $n$ sufficiently large so that
\[
\left(1-\frac{c}{n}\right)^{n-1}<q.
\]
Let $m=n-1$, and for every agent let
\[
v(T):=\ind\{T\ne \emptyset\}.
\]
Then
\[
\PP\bigl[v(X_i^{(c)})=0\bigr]=\left(1-\frac{c}{n}\right)^{n-1}<q,
\]
so $\tau_q^{(c)}(v,n)=1$ for each agent. But with only $n-1$ goods, no allocation can give every one of the $n$ agents a nonempty bundle. 
\end{proof}

\subsection{Why thinning helps.}
\label{sec:thinning-motivation}
Given the large literature on constant-factor approximations to the proportional share and MMS, a natural first thought would be to weaken the quantile share in the same way, namely by asking for a positive fraction $\alpha \tau_q(v,n)$ of the original $q$-quantile share. This does not actually weaken the problem on the hard $0/1$ core. As the following observation shows, for monotone $0/1$-valuations multiplying a quantile share by any positive constant leaves the induced allocation problem unchanged. 

\begin{observation}\label{prop:alpha-not-easier}
Fix $\alpha\in(0,1]$. On the class $\Vzeroone$, the allocation problem defined by the benchmark $\alpha\tau_q$ is equivalent to the allocation problem defined by $\tau_q$.
\end{observation}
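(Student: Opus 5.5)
The plan is to show that for a 0/1 valuation $v$, the share $\alpha\tau_q(v,n)$ and the share $\tau_q(v,n)$ declare exactly the same bundles acceptable. Once that holds, a profile has a fair allocation under one benchmark if and only if it has one under the other.

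First, we pin down the possible values of $\tau_q(v,n)$ for $v\in\Vzeroone$. By definition, $\tau_q(v,n)$ is the minimum over a subset of $v(2^{[m]})\subseteq\{0,1\}$. The subset is nonempty: taking $t=\max v$ gives $\PP[v(X)\le t]=1\ge q$. Hence $\tau_q(v,n)\in\{0,1\}$.

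Next, we compare acceptability of a bundle $S$ under the two benchmarks, splitting into two cases.
\begin{itemize}
\item If $\tau_q(v,n)=0$, then $\alpha\tau_q(v,n)=0$ as well. Under either benchmark every bundle is acceptable, since $v(S)\ge 0$ always.
\item If $\tau_q(v,n)=1$, then $\alpha\tau_q(v,n)=\alpha\in(0,1]$. Because $v(S)\in\{0,1\}$ and $\alpha>0$, the condition $v(S)\ge\alpha$ holds if and only if $v(S)=1$, which is the condition $v(S)\ge 1$.
\end{itemize}

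So for every agent, the set $\{S: v_i(S)\ge \alpha\tau_q(v_i,n)\}$ equals $\{S: v_i(S)\ge\tau_q(v_i,n)\}$. Consequently, an allocation $(S_1,\dots,S_n)$ is fair for $\alpha\tau_q$ exactly when it is fair for $\tau_q$, and feasibility on $\Vzeroone$ is equivalent for the two shares. The same argument applies verbatim to $\tau_q^{(c)}$.

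There is no real obstacle here. The only point needing care is that the quantile is always attained within the value set $v(2^{[m]})$, so it is an integer in $\{0,1\}$ and never a fractional threshold. This is exactly what the definition of $\tau_q$, with $t$ ranging over $v(2^{[m]})$, guarantees.
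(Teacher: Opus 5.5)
Your proposal is correct and follows essentially the same approach as the paper: you observe that $\tau_q(v,n)\in\{0,1\}$ for $v\in\Vzeroone$, note that the case $\tau_q=0$ is trivial, and in the case $\tau_q=1$ show that $v(S)\ge\alpha$ if and only if $v(S)=1$. Your extra check that the minimum in the definition is taken over a nonempty set is a small added detail that the paper leaves implicit.
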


\begin{proof}
Let $u\in \Vzeroone$. Since $u$ takes only the values $0$ and $1$, its $q$-quantile share also belongs to $\{0,1\}$. If $\tau_q(u,n)=0$, then both benchmarks are trivial. If $\tau_q(u,n)=1$, then for every bundle $T$ we have
\[
u(T)\ge \alpha\tau_q(u,n)
\iff u(T)\ge \alpha
\iff u(T)=1
\iff u(T)\ge \tau_q(u,n). \qedhere
\]
\end{proof}
Thus, if one wants a meaningful relaxation of the quantile benchmark, the constant has to be inserted at the level of the \emph{distribution} generating the benchmark bundle, rather than at the level of the resulting share value. This is exactly what thinning does.

Once the constant is placed in the distribution, the reduction of~\cite{BFHN} to the rainbow Erd\H{o}s matching conjecture changes in exactly the right way. Roughly, the proof in \cite{BFHN} reduces universal feasibility of the original quantile share to a rainbow matching statement in the near-perfect matching regime of the rainbow Erd\H{o}s matching problem. For the $c$-thinned benchmark, a similar reduction leads to a regime which is linearly away from the near-perfect threshold. Existing rainbow-matching theorems apply there unconditionally, which yields \cref{thm:main}; see \cref{sec:reduction,sec:rainbow} for the details. A byproduct of this is that \emph{any} progress on the rainbow Erd\H{o}s matching problem will automatically improve the constant $c$ in \cref{thm:main,thm:quantitative}. 

The same viewpoint also leads to the optimal version of the original quantile-share result. Babichenko et al.~\cite{BFHN} obtain the feasible quantile $1/(2e)$ from a median-type estimate: in the unthinned benchmark, the relevant event is that the random bundle has size at most about its mean, which only carries probability about $1/2$. By first passing to a $(1-\varepsilon)$-thinned benchmark, one moves into a genuine lower-tail regime and can replace that median bound by a Chernoff estimate with probability $1-o_M(1)$ as the blow-up parameter $M\to\infty$ (with $n$ and $\varepsilon$ fixed). A local-constancy argument then transfers the conclusion back to the unthinned benchmark, giving the optimal conditional value $1/e$ in \cref{thm:oneovere}.

It is worth emphasizing why this avoids the obstruction identified in~\cite[Section~5.1]{BFHN}. That obstruction concerns methods that attempt to improve the quantile level while still giving agents approximately equal-sized bundles of the original goods. We instead add many dummy goods and run the rainbow-matching argument on a $k$-uniform layer of the enlarged ground set $[M]$, with $k\approx \gamma M/n$ for some $\gamma<1$. The resulting sets are equal-sized only in the padded universe. After deleting the dummy goods, the real bundles may have very different sizes and need not resemble bundles of size $m/n$. Taking $M$ much larger than $m$ makes the equal-size constraint live almost entirely in the dummy coordinates, while the real goods are governed by the thinned lower-tail distribution.

\subsection{Comparison to RMMS}
\label{sec:rmms}
Recently, Feige~\cite{FeigeRMMS} introduced the \emph{residual maximin share}
(RMMS), which is universally feasible and self-maximizing. We recall the
definition. A threshold $t\ge 0$ is \emph{residually self-feasible} for $(v,n)$
if, for every $k\in\{0,\dots,n-1\}$ and every collection of pairwise disjoint
bundles $B_1,\dots,B_k$ satisfying $v(B_r)<t$ for all $r$, the remaining goods
\[
[m]\setminus \bigcup_{r=1}^k B_r
\]
can be partitioned into $n-k$ bundles each of value at least $t$. Then
\[
\operatorname{RMMS}(v,n):=
\sup\{t\ge 0:\ t \text{ is residually self-feasible for }(v,n)\}.
\]

In \cref{sec:rmms-incomp}, we show that RMMS and quantile-based shares are incomparable: depending on the valuation, one share may be greater than the other. In particular, \cref{example:complementarity} gives a simple example of a 0/1-valuation in the setting of complementary goods where the RMMS share achieves the minimum possible value of $0$, but the $c$-thinned $e^{-c}$-quantile share achieves the maximum possible value of $1$.  

Apart from being able to handle complementarity, we believe that quantile-based shares are generally attractive because they compare an allocation to an
explicit random benchmark. This makes the guarantee interpretable: rather than
asking for a value defined through all residual partitions of the ground set, the
agent asks to receive a bundle at least as good as what she would obtain from a
specified lottery (except with some controlled lower-tail probability).

The benchmark lottery involved in the definition of $c$-thinned quantile shares is also natural under stochastic supply. As a concrete example, suppose that specialized medical kits are to be distributed among clinics,
but each kit independently arrives on time with probability $c$, and the kits
that do arrive are assigned by a random lottery. Under this policy,
each kit belongs to clinic $i$'s benchmark bundle independently with probability
$c/n$, exactly matching the $c$-thinned random-bundle model. Therefore, if the $c$-thinned $e^{-c}$-quantile share were feasible with $c \approx 0.7$, then in a setting where kits arrive on time with probability $\approx 70\%$, there would be an allocation which guaranteed that each agent receives at least the median value of the benchmark distribution.

Finally, another advantage of quantile-based shares is computability. Essentially by definition, one can approximate quantile-based shares using Monte Carlo methods. On the other hand, Feige shows that computing RMMS
is weakly NP-hard already for additive valuations~\cite{FeigeRMMS}.

\subsection{Organization}
\cref{sec:comparisons} provides examples comparing the thinned quantile share with the original quantile share, and with RMMS. \cref{sec:reduction} gives the reduction from fair division to cross-dependent families. \cref{sec:rainbow} recalls the required inputs from extremal set theory and completes the proofs of \cref{thm:main,thm:oneovere}.

\subsection{Acknowledgements}
V.J.~is partially supported by NSF grant DMS-2237646. C.M.~is partially supported by a Simons Dissertation Fellowship.

\section{Examples and comparison with other shares}
\label{sec:comparisons}

\subsection{Comparison with the ordinary quantile share}
Ordinary quantile shares have several appealing properties~
\cite{BFHN}: they are ordinal, invariant under renaming of goods, and self-maximizing (in the sense of Babaioff and Feige~\cite{BF}). They are also monotone and $1$-Lipschitz in the valuation. All of these properties extend to thinned quantile shares, with the same proofs.

\medskip

The next proposition compares the $c$-thinned $e^{-c}$-quantile share for different values of $c \in (0,1]$. \emph{A priori}, it is not clear these shares should be comparable: for $0 < c < c' \leq 1$, the $c$-thinned distribution is a further thinning of the $c'$-thinned distribution, but on the other hand, we are allowed to take a larger quantile for it. However, as the next proposition shows, the $c$-thinned $e^{-c}$-quantile share is nondecreasing in $c$.  

\begin{proposition}
\label{prop:monotonicity-thinning}
Fix $n\ge 2$, let $v:2^{[m]}\to\mathbb R_{\ge 0}$ be monotone, and let
$0<c\le c'\le 1$. Then, for every $q\in(0,1)$,
\[
\tau_q^{(c)}(v,n)\le \tau_{q^{c'/c}}^{(c')}(v,n).
\]
In particular,
\[
\tau_{e^{-c}}^{(c)}(v,n)\le \tau_{e^{-c'}}^{(c')}(v,n).
\]
\end{proposition}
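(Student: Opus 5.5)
The plan is to reduce both inequalities to a single statement about down-sets under product measures. Fix a threshold $t$ and let $D:=\{S\subseteq[m]: v(S)\le t\}$. Since $v$ is monotone, $D$ is a down-set. Write $\mu_p(D)$ for the probability that a $p$-random subset of $[m]$ lies in $D$. Then $\PP[v(X^{(c)})\le t]=\mu_{c/n}(D)$.

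It suffices to prove, for $p=c/n\le p'=c'/n$,
\[
\mu_p(D)\ \ge\ \mu_{p'}(D)^{p/p'}. \qquad (\ast)
\]
Indeed, take $t=\tau^{(c')}_{q^{c'/c}}(v,n)$. Then $\mu_{p'}(D)\ge q^{c'/c}$, and $(\ast)$ gives $\mu_p(D)\ge q$. Hence $\tau_q^{(c)}(v,n)\le t$. The ``in particular'' statement is the case $q=e^{-c}$, since then $q^{c'/c}=e^{-c'}$.

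For $(\ast)$ I would first handle an integer ratio $p'=kp$ by a union-of-copies argument in the spirit of Bollobás--Thomason. Let $Y_1,\dots,Y_k$ be independent $p$-random sets. Their union is a $(1-(1-p)^k)$-random set. Because $D$ is a down-set, the event $Y_1\cup\dots\cup Y_k\in D$ implies $Y_j\in D$ for every $j$. Independence then gives
\[
\mu_{1-(1-p)^k}(D)\le \mu_p(D)^k.
\]
Since $1-(1-p)^k\le kp$ and $\mu_\cdot(D)$ is nonincreasing in $p$ for a down-set, we get $\mu_{kp}(D)\le\mu_p(D)^k$. This is exactly $(\ast)$ when $p'/p=k$.

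For a general real ratio I would work in the Poissonized parametrization. Put $p=1-e^{-s}$ and $h(s):=-\log\mu_{1-e^{-s}}(D)$. The same union argument with unequal parameters shows that $h$ is superadditive, nondecreasing and continuous, with $h(0)=0$. What $(\ast)$ actually requires is the stronger monotonicity statement that $p\mapsto \mu_p(D)^{1/p}$ is nonincreasing, equivalently
\[
p\,\frac{d}{dp}\log\mu_p(D)\ \le\ \log\mu_p(D).
\]
I would attack this with the Russo--Margulis formula. That formula expresses $\frac{d}{dp}\mu_p(D)$ as $-\frac{1}{p}\,\mathbb E[\#\{i\in X: X\in D,\ X\cup\{i\}\ldots\}]$-type pivotality counts, more precisely as minus the expected number of coordinates $i$ that are pivotal for $D$ (so that $X\cup\{i\}\notin D$ while $X\setminus\{i\}\in D$). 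The first attempt would then be to bound this derivative by the ``infinitesimal'' version of the $k$-copy inequality: split the $p(1+\varepsilon)$-random set as a union of a $p$-random set and an independent thin set, and use Harris/FKG on the down-set $D$. The aim is to get
\[
\mu_{p(1+\varepsilon)}\le \mu_p\cdot\mu_{\varepsilon p}^{\,1+o(1)},
\]
and then iterate and let $\varepsilon\to 0$. A fallback is to blow up each good into $N$ copies that are each included with probability $\approx p/N$. This turns the integer-ratio inequality into one valid on a grid of mesh $1/N$, and I would try to close the non-integer gap by letting $N\to\infty$.

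The main obstacle is exactly this passage from integer ratios to arbitrary $c'/c$. Superadditivity alone only controls $h$ along multiples, so it does not by itself give monotonicity of $h(s)/s$ at a fixed non-integer ratio. The extra input must come from the down-set structure through Harris's inequality or Russo's formula, and that is where I expect most of the work to be.
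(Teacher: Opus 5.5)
Your reduction to $(\ast)$ is correct and is exactly the paper's first step (the down-set inequality $\mu_{\alpha p}(D)\ge \mu_p(D)^{\alpha}$). Your Bollob\'as--Thomason argument does prove $(\ast)$ when $p'/p=c'/c$ is an integer. But the proposition needs $(\ast)$ for every real ratio $c'/c\ge 1$ (e.g.\ $c=2/3$, $c'=1$). For that case you offer only candidate routes, none carried out, and the two concrete ones cannot work.

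Both the ``union with an independent thin set'' step and the blow-up into $N$ copies only reproduce superadditivity. After blowing up, $\mu_r$ of the lifted down-set is $\mu_{1-(1-r)^N}(D)$, and the ratios you can compare are still integers. As you note yourself, superadditivity does not give monotonicity of $h(s)/s$.

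Even the inequality you aim for, $\mu_{p(1+\varepsilon)}\le \mu_p\,\mu_{\varepsilon p}^{1+o(1)}$, is too weak. Take $D=\{S:\{1,2\}\not\subseteq S\}$, so $g(p):=-\log\mu_p(D)\approx p^2$ for small $p$. Then $(\ast)$ requires $g(p(1+\varepsilon))-g(p)\ge \varepsilon g(p)\approx\varepsilon p^2$, whereas your bound supplies only $g(\varepsilon p)\approx \varepsilon^2p^2$. Iterating and letting $\varepsilon\to0$ yields only $g(p')\ge g(p)$ here. Invoking Harris/FKG does not change this, since the target inequality itself loses the first-order term.

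The missing ingredient is a genuine first-order (edge-isoperimetric) bound: for every down-set $D$ and $F(p):=\mu_p(D)$,
\[
-pF'(p)\ \ge\ F(p)\log\frac{1}{F(p)}.
\]
This is equivalent to $\frac{d}{dp}\bigl(\log F(p)/p\bigr)\le 0$, i.e.\ to $(\ast)$ for all real ratios. The paper proves it by induction on $m$:
\begin{enumerate}
\item Split $D$ along the last coordinate into down-sets $D_1\subseteq D_0$ of $2^{[m-1]}$, and set $A=\mu_p(D_0)$, $B=\mu_p(D_1)$. Then $F=(1-p)A+pB$ and $-F'=A-B+(1-p)(-A')+p(-B')$.
\item Apply the induction hypothesis to $A$ and $B$, and write $B=xA$, $h=1-p+px$.
\item Everything then reduces to $p(1-x)+px\log\frac1x\ge h\log\frac1h$. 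This follows from $px\log\frac1x\ge0$ and $h\log\frac1h\le 1-h=p(1-x)$.
\end{enumerate}
You could instead combine the Margulis--Russo formula with the known $p$-biased edge-isoperimetric inequality for monotone families. That inequality would have to be proved or cited, though; it does not follow from Harris or from superadditivity. As it stands, your proposal establishes the proposition only when $c'/c$ is an integer.
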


\begin{proof}

We use the standard down-set inequality
\begin{equation}\label{eq:downset-concavity}
\mu_{\alpha p}(D)\ge \mu_p(D)^\alpha ,
\end{equation}
valid for every down-set $D\subseteq 2^{[m]}$, every $p\in[0,1]$, and every
$\alpha\in[0,1]$, where $\mu_p$ denotes the product measure on $2^{[m]}$ with
inclusion probability $p$. We present a proof of this inequality at the end of the proof for the reader's convenience. 

For $t\in\mathbb R_{\ge 0}$, set
\[
D_t:=\{S\subseteq[m]:v(S)\le t\}.
\]
Since $v$ is monotone, $D_t$ is a down-set. Apply
\eqref{eq:downset-concavity} with
\[
p=\frac{c'}{n},
\qquad
\alpha=\frac{c}{c'}.
\]
Then
\[
\PP\bigl[v(X^{(c)})\le t\bigr]
=
\mu_{c/n}(D_t)
\ge
\mu_{c'/n}(D_t)^{c/c'}
=
\PP\bigl[v(X^{(c')})\le t\bigr]^{c/c'}.
\]
Hence, whenever
\[
\PP\bigl[v(X^{(c)})\le t\bigr]<q,
\]
we also have
\[
\PP\bigl[v(X^{(c')})\le t\bigr]<q^{c'/c}.
\]
Let $t=\tau_{q^{c'/c}}^{(c')}(v,n)$. Then $\mathbb P[v(X^{(c')})\le t]\ge q^{c'/c}$. By the preceding inequality, $\mathbb P[v(X^{(c)})\le t]\ge q$. Hence $\tau_q^{(c)}(v,n)\le t=\tau_{q^{c'/c}}^{(c')}(v,n)$.
The final assertion follows by substituting $q=e^{-c}$.

\medskip

It remains to prove \cref{eq:downset-concavity}. It is enough to prove that the function
\[
p\longmapsto \mu_p(D)^{1/p}
\]
is nonincreasing on $(0,1]$. Write
\[
F(p):=\mu_p(D).
\]
We prove the differential inequality
\[
-pF'(p)\ge F(p)\log\frac1{F(p)}.
\]
This implies
\[
\frac{d}{dp}\left(\frac{\log F(p)}{p}\right)
=
\frac{pF'(p)/F(p)-\log F(p)}{p^2}
\le 0,
\]
and therefore $F(q)\ge F(p)^{q/p}$ whenever $0<q\le p$. Taking
$q=\alpha p$ gives the claim.

It remains to prove the differential inequality. We prove, by induction on
$m$, that for every down-set $D\subseteq 2^{[m]}$ and every $p\in(0,1)$,
\begin{equation}\label{eq:downset-differential}
    -pF'(p)\ge F(p)\log\frac1{F(p)},
    \qquad F(p):=\mu_p(D).
\end{equation}
Here and below we use the convention that $0\log(1/0)=0$.

The cases $D=\emptyset$ and $D=2^{[m]}$ are immediate, so assume
$0<F(p)<1$. The case $m=0$ is then vacuous. For $m\ge 1$, decompose $D$
according to whether the last coordinate is present:
\[
D_0:=\{S\subseteq [m-1]:S\in D\},
\qquad
D_1:=\{S\subseteq [m-1]:S\cup\{m\}\in D\}.
\]
Both $D_0$ and $D_1$ are down-sets in $2^{[m-1]}$, and since $D$ is a
down-set we have $D_1\subseteq D_0$. Let
\[
A(p):=\mu_p(D_0),
\qquad
B(p):=\mu_p(D_1).
\]
Then $0\le B(p)\le A(p)\le 1$ and
\[
F(p)=(1-p)A(p)+pB(p).
\]
Differentiating this identity gives
\[
-F'(p)=A(p)-B(p)+(1-p)(-A'(p))+p(-B'(p)).
\]
By the induction hypothesis applied to $D_0$ and $D_1$,
\[
-pA'(p)\ge A(p)\log\frac1{A(p)},
\qquad
-pB'(p)\ge B(p)\log\frac1{B(p)}.
\]
Thus
\begin{equation}\label{eq:inductive-lower-bound}
p(-F'(p))
\ge
p(A-B)+(1-p)A\log\frac1A+pB\log\frac1B.
\end{equation}

We now compare the right-hand side of \eqref{eq:inductive-lower-bound} with
$F\log(1/F)$. If $A=0$, then also $B=0$ and $F=0$, contrary to our 
assumption. Hence $A>0$. Write
\[
B=xA
\qquad\text{with }x\in[0,1],
\]
and set
\[
h:=1-p+px.
\]
Then $F=Ah$. Substituting $B=xA$ into the right-hand side of
\eqref{eq:inductive-lower-bound}, we obtain
\[
p(A-B)+(1-p)A\log\frac1A+pB\log\frac1B
=
A\left[
h\log\frac1A+p(1-x)+px\log\frac1x
\right],
\]
where the term $x\log(1/x)$ is interpreted as $0$ when $x=0$. On the other
hand,
\[
F\log\frac1F
=
Ah\log\frac1{Ah}
=
A\left[
h\log\frac1A+h\log\frac1h
\right].
\]
Therefore \eqref{eq:downset-differential} will follow once we prove
\[
p(1-x)+px\log\frac1x\ge h\log\frac1h.
\]
But \(h=1-p(1-x)\). Since \(0<h\le 1\), the elementary inequality
\[
y\log\frac1y\le 1-y
\qquad (0<y\le 1)
\]
gives
\[
h\log\frac1h\le 1-h=p(1-x).
\]
This proves the desired differential inequality.
\end{proof}

The previous proposition shows that the $c$-thinned $e^{-c}$-quantile share is at most the original $e^{-1}$-quantile share. The next two examples compare these shares on some natural valuations.

\begin{example}\label{ex:identical-goods}
Let $v(T)=|T|$. Fix $n\ge 2$, $q\in(0,1)$, and $c\in(0,1]$. Then, as
$m\to\infty$,
\[
\tau_q(v,n)=\frac{m}{n}+o(m)
\qquad\text{and}\qquad
\tau_q^{(c)}(v,n)=\frac{cm}{n}+o(m).
\]
Consequently,
\[
\frac{\tau_q^{(c)}(v,n)}{\tau_q(v,n)}\to c.
\]
\end{example}

\begin{proof}
By definition, $|X^{(c)}|\sim\Bin(m,c/n)$. For every fixed $\delta>0$, Hoeffding's
inequality gives
\[
\PP\left[\left||X^{(c)}|-\frac{cm}{n}\right|>\delta m\right]
\le 2e^{-2\delta^2m}.
\]
Thus, for all sufficiently large $m$,
\[
\PP\left[|X^{(c)}|\le \frac{cm}{n}-\delta m\right]<q
\le
\PP\left[|X^{(c)}|\le \frac{cm}{n}+\delta m\right].
\]
By the definition of the left quantile,
\[
\tau_q^{(c)}(v,n)
\in
\left[\frac{cm}{n}-\delta m,\frac{cm}{n}+\delta m\right]
\]
for all sufficiently large $m$. Since $\delta>0$ was arbitrary, this proves
\[
\tau_q^{(c)}(v,n)=\frac{cm}{n}+o(m).
\]
Taking $c=1$ gives the ordinary quantile-share asymptotic.
\end{proof}

\begin{example}\label{ex:threshold}
For an integer threshold $T\ge 0$, define
\[
u_T(S):=\ind\{|S|\ge T\}.
\]
Fix $n\ge 2$, $q\in(0,1)$, and $c\in(0,1]$, and let $T=T_m$ depend on $m$.
Write
\[
Q_c(m):=\tau_q^{(c)}(u_{T_m},n),
\qquad
Q_1(m):=\tau_q(u_{T_m},n).
\]
Then, as $m\to\infty$, the following hold:
\[
\begin{array}{rcl}
T_m\le cm/n-\omega(\sqrt m)
&\Longrightarrow&
Q_c(m)=Q_1(m)=1,\\[1mm]
T_m\ge m/n+\omega(\sqrt m)
&\Longrightarrow&
Q_c(m)=Q_1(m)=0,\\[1mm]
cm/n+\omega(\sqrt m)\le T_m\le m/n-\omega(\sqrt m),\ c<1
&\Longrightarrow&
Q_c(m)=0,\quad Q_1(m)=1.
\end{array}
\]
\end{example}

\begin{remark}
Thus threshold valuations show both behaviours. Away from the window between
the $c$-thinned mean $cm/n$ and the ordinary mean $m/n$, thinning leaves the
benchmark unchanged. Inside that window, however, thinning can change the
quantile share from $1$ to $0$.
\end{remark}

\begin{proof}
By definition, $|X^{(a)}|\sim\Bin(m,a/n)$ and
\[
u_{T_m}(X^{(a)})=1
\qquad\Longleftrightarrow\qquad
|X^{(a)}|\ge T_m.
\]
By binomial concentration, if
\[
T_m\le \frac{am}{n}-\omega(\sqrt m),
\]
then
\[
\PP[|X^{(a)}|\ge T_m]\to 1,
\]
whereas if
\[
T_m\ge \frac{am}{n}+\omega(\sqrt m),
\]
then
\[
\PP[|X^{(a)}|\ge T_m]\to 0.
\]
For a $0/1$-valued random variable, the left $q$-quantile is $1$ exactly when
the probability of value $0$ is strictly less than $q$; otherwise it is $0$.
Applying this criterion with $a=c$ and $a=1$ proves all three cases.
\end{proof}

\subsection{Incomparability with RMMS}
\label{sec:rmms-incomp}
Recall Feige's \emph{residual maximin share}~\cite{FeigeRMMS} from \cref{sec:rmms}. A threshold $t\ge 0$ is \emph{residually self-feasible} for $(v,n)$ if for every $k\in\{0,\dots,n-1\}$ and every collection of pairwise disjoint bundles $B_1,\dots,B_k$ satisfying $v(B_r)<t$ for all $r$, the remaining goods
\[
[m]\setminus \bigcup_{r=1}^k B_r
\]
can be partitioned into $n-k$ bundles each of value at least $t$. Then
\[
\operatorname{RMMS}(v,n):=\sup\{t\ge 0:\ t \text{ is residually self-feasible for }(v,n)\}.
\]
We present two examples showing that thinned quantile shares are, in general, incomparable with RMMS.

\begin{example}\label{prop:rmms-identical}
Let $v(T)=|T|$. Then
\[
\RMMS(v,n)=\MMS(v,n)=\left\lfloor\frac{m}{n}\right\rfloor.
\]
Consequently, for every fixed $c\in(0,1)$ and every fixed $q\in(0,1)$,
\[
\frac{\tau_q^{(c)}(v,n)}{\RMMS(v,n)}\to c
\qquad\text{as }m\to\infty.
\]
\end{example}

\begin{proof}
For identical unit goods, the optimal partition for the maximin share is the balanced partition, so
 $\MMS(v,n)=\lfloor m/n\rfloor$. Since RMMS is never larger than MMS by definition, it suffices to show that $\RMMS(v,n)\ge \lfloor m/n\rfloor$. Let $t=\lfloor m/n\rfloor$ and suppose we remove $r$ bundles of size at most $t-1$. Then, at least $m-r(t-1)$ goods remain. Therefore the remaining goods can be partitioned into $n-r$ bundles of size at least $t$. Hence, $\RMMS(v,n)\ge t$, as claimed. The asymptotic comparison with $\tau_q^{(c)}(v,n)$ now follows from \cref{ex:identical-goods}.
\end{proof}

The preceding example shows that for identical unit goods, $\RMMS$ exceeds the thinned quantile share asymptotically by a constant factor. On the other hand, as the example below shows, RMMS can be arbitrarily bad compared to quantile shares in the presence of complementarity. 

\begin{example}\label{example:complementarity}
Let $n\ge 2$ and $c\in(0,1]$. Choose an integer $a\ge 1$ so that 
\[
\bigl(1-(1-c/n)^a\bigr)^2>1-e^{-c};
\]
this is possible for sufficiently large $a$ since $c > 0$. Let the goods be two disjoint blocks
\[
R=\{r_1,\dots,r_a\},
\qquad
B=\{b_1,\dots,b_a\},
\]
and define
\[
v(S):=\mathbf 1_{\{S\cap R\neq\emptyset\ \text{and}\ S\cap B\neq\emptyset\}}.
\]
In words, an agent is satisfied if she gets at least one red item and at least one blue item. Then,
\[
\operatorname{RMMS}(v,n)=0
\qquad\text{but}\qquad
\tau_{e^{-c}}^{(c)}(v,n)=1.
\]

\end{example}

\begin{proof}

We first show that $\operatorname{RMMS}(v,n)=0$. Suppose for contradiction that $\RMMS(v,n)>0$. Then there exists a residually self-feasible threshold $t>0$. Since the bundle $R$ has value $0<t$, it is an admissible bundle to remove in the definition of residual self-feasibility. After removing $R$, only goods from $B$ remain, and
therefore every remaining bundle has value $0$. Hence the remaining goods
cannot be partitioned into $n-1$ bundles of value at least $t$, which gives the desired contradiction. 

Now let $X^{(c)}$ be the $c$-thinned random bundle. Since each good is included
independently with probability $p=c/n$, and since $R$ and $B$ are disjoint,
\[
\PP\bigl[v(X^{(c)})=1\bigr]
=
\PP[X^{(c)}\cap R\neq\emptyset]\PP[X^{(c)}\cap B\neq\emptyset]
=
\bigl(1-(1-p)^a\bigr)^2
>
1-e^{-c},
\]
so that
\[\tau_{e^{-c}}^{(c)}(v,n)=1. \qedhere\]
\end{proof}

\section{Reduction}
\label{sec:reduction}

The goal of this section is to prove \cref{prop:transfer}, which reduces our fair-division problem to a statement in extremal set theory. Compared to the main result of \cite{BFHN}, there are two crucial differences: first, we allow for an arbitrary thinning parameter $c \in (0,1)$ and second, we obtain the optimal value of the quantile $q = e^{-c}$.

\begin{definition}
Let $\mathcal F_1,\dots,\mathcal F_n$ be families of subsets of the same ground set $[M]$. We say that they are \emph{cross-dependent} if one cannot choose sets
\[
F_1\in\mathcal F_1,\ \dots,\ F_n\in\mathcal F_n
\]
that are pairwise disjoint. 
\end{definition}

In the discussion below, we will be interested in cross-dependent families where each $\mc{F}_i \subseteq \binom{[M]}{k}$. Note that if $\mc{F}_1,\dots,\mc{F}_n$ are cross-dependent and $\mc{G}_1 \subseteq \mc{F}_1,\dots, \mc{G}_n \subseteq \mc{F}_n$, then $\mc{G}_1,\dots, \mc{G}_n$ are also cross-dependent. Hence, the natural extremal problem is to find cross-dependent families where the components $\mc{F}_i$ are as large as possible. 

There are two trivial ways of constructing cross-dependent families with large $|\mc{F}_i|$. First, we can choose each $\mc{F}_i$ to be the collection of all $k$-subsets of a distinguished set of $kn-1$ elements in $[M]$. Second, we can require each $\mc{F}_i$ to consist of all $k$-subsets of $[M]$ which have non-empty intersection with a distinguished set of $n-1$ elements in $[M]$. The following conjecture, which is perhaps the most famous open problem in extremal set theory, asserts that one cannot do better than these trivial constructions.

\begin{conjecture}[Rainbow Erd\H{o}s matching conjecture \cite{aharoni2017rainbow,huang2012size}]\label{conj:rainbow}
Let $n,k,M\in\N$ with $M\ge nk$. If
\[
\mathcal F_1,\dots,\mathcal F_n\subseteq \binom{[M]}{k}
\]
are cross-dependent, then
\[
\min_{j\in[n]}|\mathcal F_j|
\le
\max\left\{
\binom{M}{k}-\binom{M-n+1}{k},\,
\binom{kn-1}{k}
\right\}.
\]
\end{conjecture}

We will return to the rainbow Erd\H{o}s matching conjecture in \cref{sec:rainbow}. For now, we proceed to our reduction. 

\begin{proposition}\label{prop:transfer}
Fix $n \geq 2$.  Suppose there exists $C>1$ such that for all sufficiently large integers $k$ and $M \geq Ckn$, every cross-dependent collection

\[
\mc F_1,\dots,\mc F_n\subseteq \binom{[M]}{k}
\]
satisfies
\[
\min_{i\in[n]}|\mc F_i|
\le
\binom{M}{k}-\binom{M-n+1}{k}.
\]

Then, for $c = 1/C$, the $c$-thinned $q_c$-quantile share is feasible for every profile of $n$ monotone valuations, where
\[
q_c= q_{c,n} := \left(1-\frac cn\right)^{n-1} > e^{-c}.
\]
\end{proposition}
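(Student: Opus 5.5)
The plan is to reduce the allocation problem to the non-existence of a cross-dependent configuration, using a blow-up of the ground set by dummy goods, and then to compare densities in the limit $M\to\infty$ with $m$ and $n$ fixed.

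\textbf{Step 1: pass to up-sets.} Fix a profile $v_1,\dots,v_n$ on $[m]$ and let $\tau_i:=\tau^{(c)}_{q_c}(v_i,n)$. Define $\mathcal U_i:=\{S\subseteq[m]: v_i(S)\ge \tau_i\}$, which is an up-set because $v_i$ is monotone. Since $\tau_i$ is the least value $t$ in the range of $v_i$ with $\PP[v_i(X^{(c)})\le t]\ge q_c$, we get $\PP[v_i(X^{(c)})<\tau_i]<q_c$, that is,
\[
\mu_{c/n}(\mathcal U_i)>1-q_c=1-\left(1-\frac cn\right)^{n-1}.
\]
If $\tau_i=0$, then $\mathcal U_i=2^{[m]}$, which is consistent with this bound. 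It therefore suffices to find pairwise disjoint $S_i\in\mathcal U_i$. The leftover goods can then be handed to anyone, and monotonicity preserves fairness.

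\textbf{Step 2: blow-up.} Embed $[m]\subseteq[M]$, treating the other $M-m$ elements as dummy goods. Put $k:=\lfloor cM/n\rfloor$, so that $M\ge Ckn$ (recall $C=1/c$) and $k\to\infty$ as $M\to\infty$. Define
\[
\mathcal F_i:=\Bigl\{F\in\tbinom{[M]}{k}: F\cap[m]\in\mathcal U_i\Bigr\}.
\]
If the families $\mathcal F_1,\dots,\mathcal F_n$ are not cross-dependent, we can pick pairwise disjoint $F_i\in\mathcal F_i$, and then the sets $S_i:=F_i\cap[m]$ are pairwise disjoint with $S_i\in\mathcal U_i$, as required. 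By the hypothesis of the proposition, it therefore suffices to show that for all large $M$,
\[
|\mathcal F_i|>\binom Mk-\binom{M-n+1}k\qquad\text{for every } i.
\]

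\textbf{Step 3: compare densities.} Let $F$ be a uniform random $k$-subset of $[M]$. The density $|\mathcal F_i|/\binom Mk$ equals $\PP[F\cap[m]\in\mathcal U_i]$. Since $m$ is fixed and $k/M\to c/n$, the law of $F\cap[m]$ is hypergeometric. Each atom probability $\PP[F\cap[m]=S]$ is a ratio of binomial coefficients, and it converges to $(c/n)^{|S|}(1-c/n)^{m-|S|}$. Hence
\[
\frac{|\mathcal F_i|}{\binom Mk}\to\mu_{c/n}(\mathcal U_i).
\]
On the other side,
\[
1-\frac{\binom{M-n+1}{k}}{\binom Mk}=1-\prod_{j=0}^{n-2}\frac{M-k-j}{M-j}\to 1-\left(1-\frac cn\right)^{n-1}=1-q_c.
\]
The strict inequality from Step 1 has a fixed positive margin, and there are only finitely many agents. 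So for $M$ large enough (with $k$ also large, as the hypothesis requires), the desired inequality holds for every $i$. The families are then not cross-dependent, which finishes the argument. The inequality $q_c>e^{-c}$ is elementary, since $(1-c/n)^{n-1}>e^{-c}$ follows from $\log(1-x)>-x/(1-x)$.

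The main point requiring care is Step 3. We need the convergence of the hypergeometric law to the product measure, and the choice $k=\lfloor cM/n\rfloor$ must simultaneously satisfy $M\ge Ckn$ and make both densities converge to the right limits. Because $m$ is fixed, the convergence is a finite computation, so I expect no real difficulty beyond bookkeeping.
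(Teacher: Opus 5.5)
Your proof is correct, and it takes a genuinely different, more elementary route than the paper's.

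\textbf{The paper's route.} The paper argues by contradiction, in four steps.
\begin{enumerate}
\item It shrinks $c$ to some $\gamma<c$ by a continuity argument and takes $k=\lfloor \frac{\gamma}{n}(M+M^{2/3})\rfloor$.
\item It uses the hypothesis to find one agent whose rejected $k$-sets number at least $\binom{M-n+1}{k}$.
\item It pushes this bound down to every layer $t\le k$ via the Lov\'asz form of Kruskal--Katona.
\item Summing over layers, it obtains $\PP[u_1(X^{(\gamma)})=0]\ge (1-\gamma/n)^{n-1}\PP[\Bin(M-n+1,\gamma/n)\le k]$. A Chernoff bound then sends the last factor to $1$.
\end{enumerate}
That tail bound is what forces the $M^{2/3}$ slack above the mean. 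The slack in turn forces the auxiliary $\gamma<c$, so that $M\ge Ckn$ still holds.

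\textbf{Your route.} You use the fact that, after padding, membership in $\mathcal F_i$ depends only on $F\cap[m]$ with $m$ fixed. So the layer-$k$ density of $\mathcal F_i$ converges (hypergeometric to binomial) directly to $\mu_{c/n}(\mathcal U_i)$. Meanwhile $1-\binom{M-n+1}{k}/\binom{M}{k}\to 1-q_c$. The strict inequality coming from the left-quantile definition supplies the margin. Your choice $k=\lfloor cM/n\rfloor$ satisfies $M\ge Ckn$ exactly, so you need no slack, no $\gamma$, no shadows and no tail bound. You also get the contrapositive form directly: all $n$ families are large, hence not cross-dependent. This treats inactive agents uniformly, since for them $\mathcal U_i=2^{[m]}$. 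Your verification of $q_c>e^{-c}$ is also fine.

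\textbf{What each buys.} The paper's shadow-plus-layer-sum inequality bounds the $\mu_{\gamma/n}$-measure of an arbitrary down-set in $2^{[M]}$ from its density in a single layer. It does not use that the family is determined by a fixed finite set of coordinates, and it stays close to the original argument of Babichenko et al. For this proposition, though, the padding already makes every family depend only on $[m]$, so your direct limit computation suffices and is shorter.

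Your method also adapts to the proof of \cref{thm:oneovere}. There one must still first pass to some $\gamma<1$, because at $k=\lfloor M/n\rfloor$ the term $\binom{kn-1}{k}$ is not $o\bigl(\binom Mk\bigr)$. After that step, your density computation with $k=\lfloor \gamma M/n\rfloor$ replaces the Kruskal--Katona and Chernoff step there as well.
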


The remainder of this section is devoted to the proof of \cref{prop:transfer}. As in \cite{BFHN}, we first pass from general monotone valuations to monotone $0/1$ valuations. 

\begin{lemma}\label{lem:01-reduction}
Fix $c\in(0,1]$ and $q \in (0,1)$. Let $v_1,\dots,v_n\in\V$, and write
\[
\tau_i:=\tau_q^{(c)}(v_i,n).
\]
For every agent $i$ with $\tau_i>0$, there exists a valuation $u_i\in\Vzeroone$ such that
\begin{enumerate}[label=\textup{(\roman*)}]
    \item $\tau_q^{(c)}(u_i,n)=1$, and
    \item every bundle $T$ with $u_i(T)=1$ satisfies $v_i(T)\ge \tau_i$.
\end{enumerate}
\end{lemma}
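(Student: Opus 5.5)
The natural choice is the threshold valuation at the share value:
\[
u_i(T):=\ind\{v_i(T)\ge \tau_i\}.
\]
Since $v_i$ is monotone, $u_i$ is monotone. Moreover $u_i(\emptyset)=0$, because $v_i(\emptyset)=0<\tau_i$. Hence $u_i\in\Vzeroone$. Property (ii) holds by construction: if $u_i(T)=1$ then $v_i(T)\ge\tau_i$. So all the work is in (i).

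For (i), recall that for a $0/1$-valued random variable the left $q$-quantile equals $1$ exactly when $\PP[\cdot=0]<q$, as used in \cref{ex:threshold}. So we must show
\[
\PP\bigl[u_i(X^{(c)})=0\bigr]=\PP\bigl[v_i(X^{(c)})<\tau_i\bigr]<q.
\]
Since $v_i$ takes finitely many values, let $t^-$ be the largest element of $v_i(2^{[m]})$ strictly below $\tau_i$. This element exists because $0=v_i(\emptyset)<\tau_i$ and $0\in v_i(2^{[m]})$. Then
\[
\{v_i(X^{(c)})<\tau_i\}=\{v_i(X^{(c)})\le t^-\}.
\]
The share $\tau_i$ is by definition the minimum of those $t\in v_i(2^{[m]})$ with $\PP[v_i(X^{(c)})\le t]\ge q$. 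Since $t^-$ lies in this value set and $t^-<\tau_i$, minimality forces $\PP[v_i(X^{(c)})\le t^-]<q$. This gives (i).

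The one point needing care is that the quantile is taken over the finite set of attained values $v_i(2^{[m]})$, so a predecessor $t^-$ exists. Positivity of $\tau_i$ together with $v_i(\emptyset)=0$ is exactly what makes this predecessor available. Without it, in the case $\tau_i=0$, the event $\{v_i<\tau_i\}$ would be empty and $u_i$ would be identically $1$, so it would not lie in $\Vzeroone$.
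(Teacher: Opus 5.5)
Your proposal is correct and follows the paper's proof: you use the same threshold valuation $u_i(T)=\mathbf 1\{v_i(T)\ge\tau_i\}$ and deduce (i) from the minimality of $\tau_i$ together with the $0/1$ quantile criterion. Your predecessor-$t^-$ argument, and your check that $u_i(\emptyset)=0$, simply spell out steps the paper states in one line.
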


\begin{proof}

Fix some agent $i$ with $\tau_i > 0$ and define
\[ u_i(T) = \mbf1\{ v_i(T) \geq \tau_i \}. \]
Then $u_i$ is a monotone $0/1$-valuation and (ii) is immediate. It remains to prove (i). By the minimality of $\tau_i$ in the definition of the left $q$-quantile, and since $\tau_i>0$, we have \[ \mathbb P\bigl[v_i(X^{(c)})<\tau_i\bigr]<q. \] Equivalently, \[ \mathbb P\bigl[u_i(X^{(c)})=0\bigr]<q. \] For a $0/1$-valued random variable, the left $q$-quantile is equal to $1$ exactly when the probability of value $0$ is strictly less than $q$. Hence $\tau_q^{(c)}(u_i,n)=1$.
\end{proof}

By padding with ``dummy goods'', we can also reduce to the setting where the number of goods is sufficiently large compared to the number of agents. 

\begin{lemma}\label{lem:padding}
Fix $c\in(0,1]$, $q \in (0,1)$, and let $v\in \V$ be a valuation on $[m]$. Let $M\ge m$, and extend $v$ to a valuation $\widetilde v$ on $[M]$ by
\[
\widetilde v(T):=v(T\cap [m]).
\]
Then
\[
\tau_q^{(c)}(\widetilde v,n)=\tau_q^{(c)}(v,n).
\]
\end{lemma}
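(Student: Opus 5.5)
The plan is to show that $\widetilde v(\widetilde X^{(c)})$ has exactly the same distribution as $v(X^{(c)})$, where $\widetilde X^{(c)}\subseteq[M]$ and $X^{(c)}\subseteq[m]$ are the $c$-thinned random bundles on the two ground sets. Equality of the quantile shares should then follow directly from the definition.

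First I will couple the two bundles. In $\widetilde X^{(c)}$ each of the $M$ goods is included independently with probability $c/n$. Restricting to the first $m$ coordinates, $\widetilde X^{(c)}\cap[m]$ includes each good of $[m]$ independently with probability $c/n$. Hence $\widetilde X^{(c)}\cap[m]$ has the same law as $X^{(c)}$. Since $\widetilde v(\widetilde X^{(c)})=v(\widetilde X^{(c)}\cap[m])$ by definition, this gives, for every $t$,
\[
\PP\bigl[\widetilde v(\widetilde X^{(c)})\le t\bigr]=\PP\bigl[v(X^{(c)})\le t\bigr].
\]

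Second, I will check that the minimisation in the definition of $\tau_q^{(c)}$ runs over the same set of thresholds in both cases, namely that $\widetilde v(2^{[M]})=v(2^{[m]})$. Every $T\subseteq[M]$ has $\widetilde v(T)=v(T\cap[m])\in v(2^{[m]})$. Conversely, every $S\subseteq[m]$ is also a subset of $[M]$, and $\widetilde v(S)=v(S)$. With the same candidate set and the same distribution function, the two minima coincide.

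Along the way I will note that $\widetilde v$ is monotone with $\widetilde v(\emptyset)=0$, so $\widetilde v\in\V$ and its share is well defined. No step is a real obstacle; the only point needing care is the range equality in the second step, since the share is defined as a minimum over values attained by the valuation.
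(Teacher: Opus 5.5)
Your proposal is correct and matches the paper's proof: both rest on the fact that $\widetilde X^{(c)}\cap[m]$ has the law of the $c$-thinned bundle on $[m]$, so $\widetilde v(\widetilde X^{(c)})$ and $v(X^{(c)})$ are identically distributed. Your explicit check that $\widetilde v(2^{[M]})=v(2^{[m]})$ supplies a detail the paper leaves implicit when it concludes that the quantiles coincide.
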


\begin{proof}
Under the $c$-thinned process on $[M]$, the random set $X^{(c)}\cap [m]$ has exactly the same distribution as the $c$-thinned random bundle on $[m]$. Since $\widetilde v(T)=v(T\cap [m])$, the random variables $\widetilde v(X^{(c)})$ and $v(X^{(c)})$ are identically distributed. Hence their quantiles are identical.
\end{proof}

Finally, we will use the following classical shadow estimate.

\begin{theorem}[Kruskal--Katona, Lov\'asz form \cite{lovasz2007combinatorial}]\label{thm:KK}
Let $\mathcal G_k\subseteq \binom{[M]}{k}$, and for $0\le t\le k$ define its $t$-shadow by
\[
\partial_t\mathcal G_k:=\left\{S\in\binom{[M]}{t}:\exists T \in \mathcal G_k\text{ with }S\subseteq T\right\}.
\]
If $|\mathcal G_k|\ge \binom{x}{k}$ for some real $x\ge k$, then
\[
|\partial_t\mathcal G_k|\ge \binom{x}{t} \qquad\text{for every }0\le t\le k. 
\]

\end{theorem}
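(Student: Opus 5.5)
The plan is to first reduce to the case $t=k-1$, and then prove that case by shifting and a double induction. For the reduction: if $|\partial_{k-1}\mathcal G_k|\ge\binom{x}{k-1}$ holds whenever $|\mathcal G_k|\ge\binom xk$, apply it again to $\partial_{k-1}\mathcal G_k$ with the same real $x\ge k-1$, and keep going. Since $\partial_{t}(\partial_{t+1}\mathcal G)=\partial_t\mathcal G$, iterating gives the claim for every $0\le t\le k$. Throughout, $\binom{x}{j}=x(x-1)\cdots(x-j+1)/j!$ for real $x$. This function is increasing in $x$ for $x\ge j-1$, and that monotonicity is used tacitly below.

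For the case $t=k-1$ I will induct on $k$ and on $M$. The base case $k=1$ is trivial, since the $0$-shadow of a nonempty family is $\{\emptyset\}$, which has size $1=\binom{x}{0}$. Next I apply the standard $(i,j)$-shifts, which replace $j$ by $i<j$ whenever the result is not already present. These shifts preserve $|\mathcal G|$ and do not increase $|\partial_{k-1}\mathcal G|$; this is the classical compression lemma, which I will verify by the usual case check. So I may assume $\mathcal G$ is shifted with respect to the element $1$.

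Now split $\mathcal G$ into two families:
\[
\mathcal G_0=\{T\in\mathcal G:1\notin T\},\qquad \mathcal G_1=\{T\setminus\{1\}:1\in T\in\mathcal G\}.
\]
Both live on $[M]\setminus\{1\}$, with $\mathcal G_0$ being $k$-uniform and $\mathcal G_1$ being $(k-1)$-uniform. Shiftedness gives $\partial_{k-1}\mathcal G_0\subseteq\mathcal G_1$: if $S\subseteq T\in\mathcal G_0$ with $|S|=k-1$, then $S\cup\{1\}\in\mathcal G$. The shadow of $\mathcal G$ contains the two disjoint families $\mathcal G_1$ (sets avoiding $1$) and $\{S\cup\{1\}:S\in\partial_{k-2}\mathcal G_1\}$. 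Hence
\[
|\partial_{k-1}\mathcal G|\ge|\mathcal G_1|+|\partial_{k-2}\mathcal G_1|.
\]

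The key step is to show $|\mathcal G_1|\ge\binom{x-1}{k-1}$. Suppose not. Then $|\mathcal G_0|>\binom xk-\binom{x-1}{k-1}=\binom{x-1}{k}$. By the induction hypothesis on the smaller ground set, $|\partial_{k-1}\mathcal G_0|\ge\binom{x-1}{k-1}$, and since $\partial_{k-1}\mathcal G_0\subseteq\mathcal G_1$ this is a contradiction. Given this bound, induction on $k$ applied to $\mathcal G_1$ with parameter $x-1\ge k-1$ yields $|\partial_{k-2}\mathcal G_1|\ge\binom{x-1}{k-2}$. Adding the two bounds and using Pascal's rule $\binom{x-1}{k-1}+\binom{x-1}{k-2}=\binom{x}{k-1}$ finishes the proof.

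I expect the main obstacle to be the edge case where $x-1<k$, in which the induction hypothesis cannot be invoked literally for $\mathcal G_0$. In that range $0\le\binom{x-1}{k}<1$, so the assumption only says $\mathcal G_0\neq\emptyset$. One then uses directly that a single $k$-set has $k$ subsets of size $k-1$, and that $k>\binom{x-1}{k-1}$ when $x-1<k$, to reach the same contradiction. A second point needing care is verifying that shifting does not increase the shadow; I would cite or reprove the standard argument for this.
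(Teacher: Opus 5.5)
The paper does not prove this statement. It quotes it as a black box from Lov\'asz's book and only applies it with the integer value $x=M-n+1$ in the reduction to cross-dependent families. So there is no argument in the paper to compare yours against.

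Your proof is correct. It is the standard shifting proof of the Lov\'asz form, essentially Frankl's short proof of Kruskal--Katona. The individual steps check out:
\begin{itemize}
\item Iterating shadows reduces everything to $t=k-1$, since $x\ge k\ge j$ at every uniformity $j$ you pass through.
\item Passing to a family stable under every $(1,j)$-shift is legitimate. The process terminates because each nontrivial shift lowers the total sum of elements.
\item The inclusion $\partial_{k-1}\mathcal G_0\subseteq\mathcal G_1$ holds.
\item The disjoint split by whether a set contains $1$ gives $|\partial_{k-1}\mathcal G|\ge|\mathcal G_1|+|\partial_{k-2}\mathcal G_1|$.
\item The contradiction argument gives $|\mathcal G_1|\ge\binom{x-1}{k-1}$, and Pascal's rule holds for real $x$.
\end{itemize}
The induction is cleanest read as induction on $M$ for all $k$ at once. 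The step uses the cases $(k,M-1)$ and $(k-1,M-1)$, the case $M=0$ is vacuous, and $k=1$ is immediate.

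Your separate treatment of $k\le x<k+1$ is exactly what is needed. There $0\le\binom{x-1}{k}<1$ only forces $\mathcal G_0\neq\emptyset$, and a single $k$-set already puts $k>\binom{x-1}{k-1}$ sets into $\mathcal G_1$. This closes the one place where the induction hypothesis would otherwise be called with a parameter below $k$.

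The only ingredient you leave to citation is the compression lemma $\partial(S_{1j}\mathcal G)\subseteq S_{1j}(\partial\mathcal G)$, which is standard. With it, your write-up makes self-contained what the paper simply imports.
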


We now have all the ingredients needed to prove \cref{prop:transfer}.

\begin{proof}[Proof of \cref{prop:transfer}]
Let $n\geq 2$ be the number of agents and let 
\[c = 1/C, \qquad q_c = (1-c/n)^{n-1}.\] 
Suppose, for contradiction, that there is a profile $v_1,\dots,v_n$ of monotone valuations on $[m]$ which admits no allocation giving every agent her $c$-thinned $q_c$-quantile share.
Let 
\[\tau_i:=\tau_{q_c}^{(c)}(v_i,n),
\]
and let $I\subseteq[n]$ be the set of active agents, namely those with $\tau_i>0$. If $|I|\le 1$, then a fair allocation is immediate, so we may assume that $|I|\ge 2$. Apply \cref{lem:01-reduction} to every active agent. Thus, for each $i\in I$, there is a monotone $0/1$ valuation $u_i$ such that
\[
\tau_{q_c}^{(c)}(u_i,n)=1,
\]
and such that any bundle $T$ with $u_i(T)=1$ satisfies $v_i(T)\ge \tau_i$. Therefore it is enough to find pairwise disjoint bundles giving every active agent value $1$ under her corresponding $u_i$.

For a collection of bundles $\mc{A} \subseteq 2^{[m]}$ and $a \in [0,1]$, define the weight
\[ w(\mc{A},a) := \mb{P}[X^{(a)} \in \mc{A}] =  \sum_{S \in \mc{A}} \mb{P}[X^{(a)} = S].\]
For $i \in I$, let $\mc{R}_i$ denote the collection of bundles that agent $i$ rejects
\[\mc{R}_i := \{S\subseteq [m]: u_i(S) = 0\}.\]
Since $u_i$ is $0/1$-valued and $\tau_{q_c}^{(c)}(u_i,n) = 1$, it follows that
\[w(\mc{R}_i, c) < q_c.\]

Since $a\mapsto w(\mathcal R_i,a)-q_a$ is continuous and is negative at $a=c$, there exists $\varepsilon_i>0$ such that \[ w(\mathcal R_i,a)<q_a \] for every $a\in[c-\varepsilon_i,c]$. Let $\varepsilon:=\min\{c/2,\min_{i\in I}\varepsilon_i\}$ and set $\gamma:=c-\varepsilon$. Then $\gamma\in(0,c)$ and $w(\mathcal R_i,\gamma)<q_\gamma$, and therefore $\tau_{q_\gamma}^{(\gamma)}(u_i,n)=1$, for every $i\in I$.

We now pad the instance with dummy goods. Throughout, assume that $n,m,\gamma, C$ are fixed and let $M\ge m$ be sufficiently large for various inequalities below to hold. Extend each $u_i$ to $[M]$ by ignoring the dummy goods, and continue to denote the extended valuations by $u_i$. By \cref{lem:padding}, the active agents still satisfy
\[
\tau_{q_\gamma}^{(\gamma)}(u_i,n)=1.
\]
Moreover, if the padded instance admits an allocation giving every active agent value $1$ under $u_i$, then intersecting the allocated bundles with $[m]$ gives a fair allocation for the original profile. Thus the padded instance is still a counterexample.

Let
\[ k := \left\lfloor \frac \gamma n \paren{M + M^{2/3}} \right\rfloor. \]
The reason for this choice will become clear below.  
Since $\gamma < c = 1/C$, we have $C \gamma < 1$. Therefore, for all sufficiently large $M$,

\[ Ckn \leq C \gamma \paren{M + M^{2/3}} \leq M\]
which will allow us to invoke the hypothesis in the statement of \cref{prop:transfer}.

For each active agent $i \in I$, define the $k$-uniform family
\[
\mc F_i:=\Bigl\{T\in \binom{[M]}{k}:u_i(T)=1\Bigr\}.
\]
For a non-active agent $j \in [n]\setminus I$, define $\mc{F}_j := \binom{[M]}{k}$. We claim that the families $\mc{F}_1,\dots, \mc{F}_n$ are cross-dependent. Indeed, if there were pairwise disjoint sets
$T_i\in \mc F_i$ for all $i\in[n]$, then the active agents $i$ would receive pairwise disjoint bundles $T_i$ with $u_i(T_i) = 1$, and by monotonicity of the valuations, one could distribute the
remaining goods arbitrarily and obtain an allocation giving every active agent value $1$. This contradicts the choice of the counterexample.

Since $\mc{F}_1,\dots, \mc{F}_n$ are cross-dependent, it follows by assumption that there exists an agent $i_* \in [n]$ such that
\[
|\mc F_{i_*}|\le \binom{M}{k}-\binom{M-n+1}{k}.
\]

For all sufficiently large $M$, the right hand side is smaller than $\binom{M}{k}$, so that the agent $i_*$ must be active. By relabeling if necessary, assume that $i_*=1 \in I$. 

 For each $0\le t\le k$, define
\[
\mc G_{t}:=\Bigl\{T\in \binom{[M]}{t}:u_1(T)=0\Bigr\}.
\]

In particular,
\[
\mc G_k=\binom{[M]}{k}\setminus \mc F_1,
\]
so that
\[|\mc{G}_k| \geq \binom{M-n+1}{k}.\]
Since $u_1$ is monotone, every subset of a rejected bundle is rejected. Hence,
\[
\partial_t\mc G_k\subseteq \mc G_t.
\]
By \cref{thm:KK},
\[
|\mc G_t|\ge |\partial_t\mc G_k|\ge \binom{M-n+1}{t}
\qquad\text{for every }0\le t\le k.
\]

We now estimate the rejection probability of agent $1$ under the $\gamma$-thinned random bundle distribution:
\begin{align*}
\mb{P}\bigl[u_1(X_1^{(\gamma)})=0\bigr]
& \geq \sum_{t = 0}^k \mb{P}\bigl[|X_1^{(\gamma)}| = t\bigr] \mb{P}\bigl[u_1(X_1^{(\gamma)})=0 \mid |X_1^{(\gamma)}|=t\bigr] \\
& = \sum_{t=0}^k \binom Mt \paren{\frac \gamma n}^t \paren{1 - \frac \gamma n}^{M-t} \cdot \frac{|\mc G_t|}{\binom{M}{t}} \\
& \geq \sum_{t=0}^k \binom{M-n+1}{t} \paren{\frac \gamma n}^t \paren{1 - \frac \gamma n}^{M-t} \\
& = \paren{1 - \frac \gamma n}^{n-1} \mb{P}\bigl[ \Bin(M-n+1,\gamma /n) \leq k \bigr].
\end{align*}

By the choice of $k$,
\[k - \frac{\gamma}{n}(M-n+1) = \Omega_{n,\gamma}(M^{2/3}).\]
By a standard Chernoff bound, \[ \mathbb P\bigl[\operatorname{Bin}(M-n+1,\gamma/n)\le k\bigr] \ge 1-\exp\bigl(-\Omega_{\gamma,n}(M^{1/3})\bigr)=1-o_M(1), \]
as $M \to \infty$ holding all other parameters constant.  

Since 
\[\mb{P}[u_1(X_1^{(\gamma)}) = 0] = w(\mc{R}_1, \gamma) < q_\gamma = \paren{1-\frac\gamma n}^{n-1},\]
by our choice of $\gamma$,
we may choose $M$ sufficiently large so that
\[
\mb{P}[u_1(X_1^{(\gamma)})=0] \geq \left(1-\frac{\gamma}{n}\right)^{n-1}
\PP\left[
\Bin\left(M-n+1,\frac{\gamma}{n}\right)\le k
\right]
>
\mb{P}[u_1(X_1^{(\gamma)}) = 0],
\]
which is a contradiction. \qedhere

\end{proof}

\section{Proofs of the main theorems}
\label{sec:rainbow}

For progress on the Erd\H{o}s matching conjecture and its rainbow variant, we refer the reader to the recent survey~\cite{RainbowSurvey}. For \cref{thm:main}, we will only need the following result, which combines two unconditional theorems. The first part is due to Frankl--Kupavskii \cite{FK} and the second part is due to Kupavskii \cite{Kup}.

\begin{theorem}[Frankl--Kupavskii~\cite{FK}, Kupavskii~\cite{Kup}]
\label{thm:rainbow-matching}
Let $n,k, M \in \mb{N}$ with $M \geq kn$. Suppose
\[
\mathcal F_1,\dots,\mathcal F_n\subseteq \binom{[M]}{k}
\]
are cross-dependent. Then,
\[
\min_{j\in[n]}|\mathcal F_j|
\le
\binom{M}{k}-\binom{M-n+1}{k}
\]
under one of the following two conditions:
\begin{enumerate}
\item \label{thm:FK} $M \geq 12 nk \log (e^2 n)$; 
\item \label{thm:Kup} $M > 3enk$ and $n > 10^7 + 1$. 
\end{enumerate}
\end{theorem}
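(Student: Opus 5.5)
This theorem is not proved from scratch in the paper. It is an import from the literature, so my plan is to check that the cited statements imply exactly the displayed inequality under the two stated conditions, after translating notation. The cited works use $s+1$ families $\mathcal F_1,\dots,\mathcal F_{s+1}\subseteq\binom{[M]}{k}$ (often written with $n$ for the ground-set size and $s$ for the matching size) that contain no \emph{rainbow matching}, meaning no pairwise disjoint $F_i\in\mathcal F_i$. Under the substitution $s=n-1$ and ground set $[M]$, this is precisely our notion of cross-dependence. Their conclusion reads $\min_i|\mathcal F_i|\le\binom{M}{k}-\binom{M-s}{k}$, which becomes $\binom{M}{k}-\binom{M-n+1}{k}$.

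For part (\ref{thm:FK}), I would cite the Frankl--Kupavskii theorem on the rainbow (cross-dependent) Erd\H{o}s matching problem. It asserts the extremal bound $\binom{M}{k}-\binom{M-s}{k}$ for $s+1$ cross-dependent $k$-uniform families once the ground set is at least $12sk\log(e^2 s)$ or similar. I would check that the stated threshold $M\ge 12nk\log(e^2n)$ dominates theirs after the shift $s=n-1$. This holds because the right-hand side is increasing in $s$, so replacing $s$ by the larger $n$ only strengthens the hypothesis.

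For part (\ref{thm:Kup}), I would cite Kupavskii's result. It gives the same rainbow bound for $M>3e(s+1)k$ whenever $s$ exceeds an absolute constant of order $10^7$, which is where the condition $n>10^7+1$ comes from. I would again check that $n=s+1$ and that the inequality $M>3enk$ matches or dominates the hypothesis in the cited statement.

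The main obstacle, if any, is bookkeeping rather than mathematics. The cited papers differ in how they index the number of families versus the matching number ($s$ vs.\ $s+1$) and in their normalizations, and it is easy to be off by one. A second point to address is the general conjectured bound, which contains $\max\{\cdot,\binom{kn-1}{k}\}$. I would remark that in both regimes $M$ is a large multiple of $nk$, so the first term $\binom{M}{k}-\binom{M-n+1}{k}$ is the larger one. Indeed, it is at least $\binom{M-1}{k-1}$, which exceeds $\binom{kn-1}{k}$ when $M\ge 3nk$. This is consistent with the cited results stating only that term.
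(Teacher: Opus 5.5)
Your plan is the same as the paper's. The paper gives no proof and simply imports the rainbow bound, from Frankl--Kupavskii for (1) and Kupavskii for (2), with $n=s+1$ families on ground set $[M]$. Your bookkeeping is right: $s=n-1$, monotonicity of $s\mapsto 12sk\log(e^2 s)$, and $s>10^7\iff n>10^7+1$.

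The one thing to fix is the justification in your closing remark. The claim $\binom{M-1}{k-1}\ge\binom{kn-1}{k}$ for $M\ge 3nk$ is false.
\begin{itemize}
\item For $k=2$, $n=5$, $M=30$ it gives $29<36$.
\item For $k=2$ it fails inside both regimes once $n$ is large. In regime (2), with $M$ near $3enk$, you are comparing roughly $6en$ with $2n^2$.
\end{itemize}
The identity $\binom{M-1}{k-1}=\frac{k}{M}\binom{M}{k}$ only beats $\binom{kn-1}{k}$ when $k/M$ is bounded below. That is the regime $k\approx\gamma M/n$, which is where the paper uses it in the proof of \cref{thm:oneovere}.

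Your conclusion is nevertheless true for all $M\ge 3nk$. By Pascal's rule,
\[
\binom{M}{k}-\binom{M-n+1}{k}=\sum_{j=1}^{n-1}\binom{M-j}{k-1}\ge (n-1)\binom{M-n+1}{k-1},
\]
while $\binom{kn-1}{k}=\frac{kn-1}{k}\binom{kn-2}{k-1}$. Since $M-n+1\ge 2(kn-2)$, each of the $k-1$ factors in the ratio $\binom{M-n+1}{k-1}/\binom{kn-2}{k-1}$ is at least $2$. Finally, $(n-1)2^{k-1}\ge n-\frac{1}{k}$.

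This step is load-bearing only if the source you cite states the conjecture in its max form. If it states the single-term bound directly, as you assume, the remark can simply be dropped.
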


We now state and prove a quantitative version of \cref{thm:main}.

\begin{theorem}\label{thm:quantitative}
The following hold unconditionally:
\begin{enumerate}[label=\textup{(\roman*)}]
    \item If $n\geq 2$ and
    \[
    0<c<\frac{1}{12\log(e^2n)},
    \]
    then for every $q\le (1-c/n)^{n-1}$, the $c$-thinned $q$-quantile share is feasible for every profile of $n$ monotone valuations.
    \item If $n\ge 10^7+2$ and
    \[
    0<c<\frac{1}{3e},
    \]
    then the same conclusion holds.
    \item Consequently, if $n\geq 2$ and 
    \[
    0<c<\frac{1}{12\log\bigl(e^2(10^7+1)\bigr)},
    \]
    then for every $q\le e^{-c}$ the $c$-thinned $q$-quantile share is universally feasible.
\end{enumerate}
\end{theorem}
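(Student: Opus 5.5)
The plan is to combine \cref{prop:transfer} with \cref{thm:rainbow-matching}, and then use monotonicity of quantile shares in $q$ to pass from $q_c$ to any smaller $q$.

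For (i), fix $n\ge 2$ and $0<c<1/(12\log(e^2n))$, and set $C:=1/c$. Then $C>12\log(e^2n)>1$. Moreover, $M\ge Ckn$ implies $M\ge 12nk\log(e^2n)\ge kn$. So part~\ref{thm:FK} of \cref{thm:rainbow-matching} gives exactly the hypothesis of \cref{prop:transfer}, for every $k$ and not just large $k$. \cref{prop:transfer} then says the $c$-thinned $q_c$-quantile share is feasible for every profile of $n$ monotone valuations.

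To reach all $q\le q_c$, I use that $\tau^{(c)}_q(v,n)$ is nondecreasing in $q$. This holds because the set $\{t:\PP[v(X^{(c)})\le t]\ge q\}$ shrinks as $q$ grows. Hence any allocation that is fair for the $q_c$-benchmark is also fair for the $q$-benchmark.

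For (ii), the argument is the same with part~\ref{thm:Kup}. Take $n\ge 10^7+2$, so that $n>10^7+1$, and $c<1/(3e)$. Then $C=1/c>3e$, and $M\ge Ckn$ gives $M>3enk$ whenever $k\ge1$. The strict inequality here needs a moment's care: $Ckn>3ekn$ since $C>3e$ and $kn>0$.

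For (iii), let $c<1/(12\log(e^2(10^7+1)))$ and split on $n$.
\begin{itemize}
\item If $2\le n\le 10^7+1$, then $\log(e^2n)\le\log(e^2(10^7+1))$, so $c<1/(12\log(e^2n))$ and (i) applies.
\item If $n\ge 10^7+2$, then $c<1/(12\log(e^2\cdot10^7))<1/(3e)$, since $12\log(e^2\cdot 10^7)\approx 12\cdot 18>3e$. So (ii) applies.
\end{itemize}
In either case the $c$-thinned $q$-quantile share is feasible for every $q\le q_{c,n}$. It remains to check $e^{-c}\le(1-c/n)^{n-1}$. This follows from $\log(1-x)\ge -x/(1-x)$ with $x=c/n$:
\[
(n-1)\log\paren{1-\frac cn}\ge -\frac{(n-1)c/n}{1-c/n}=-\frac{(n-1)c}{n-c}\ge -c,
\]
where the last step uses $c\le 1$. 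So every $q\le e^{-c}$ satisfies $q\le q_{c,n}$ for all $n$, which gives universal feasibility.

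The proof has no real obstacle. The only points needing care are matching the hypotheses exactly: the strict versus non-strict inequalities in \cref{thm:rainbow-matching}, the requirement $C>1$, and the $n$-uniform bound $q_{c,n}\ge e^{-c}$.
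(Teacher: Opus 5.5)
Your proposal is correct and takes the same route as the paper. You apply \cref{prop:transfer} with $C=1/c$, supply its hypothesis from the matching part of \cref{thm:rainbow-matching}, pass to smaller $q$ by monotonicity of $\tau^{(c)}_q$ in $q$, and split on $n\le 10^7+1$ versus $n>10^7+1$ for (iii); you also verify two points the paper only asserts, namely the monotonicity in $q$ and the bound $(1-c/n)^{n-1}\ge e^{-c}$, via $\log(1-x)\ge -x/(1-x)$.
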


\begin{proof}[Proof of \cref{thm:quantitative}]
For (i), fix $0 < c<1/(12\log(e^2n))$ and apply \cref{prop:transfer} with $C=1/c$. Since $C>12\log(e^2n)$, the Frankl--Kupavskii theorem applies whenever $M\ge Ckn$. This gives feasibility at $q_c=(1-c/n)^{n-1}$, and hence at every $q\le q_c$.

For (ii), fix $c<1/(3e)$ and apply \cref{prop:transfer} with $C=1/c$. Since $C>3e$, Kupavskii's theorem applies whenever $M\ge Ckn$, and the same monotonicity in $q$ gives the result for all $q\le (1-c/n)^{n-1}$.

For (iii), if $n\le 10^7+1$, then the assumed bound on $c$ implies the hypothesis of (i). If $n>10^7+1$, the same bound implies $c<1/(3e)$, so (ii) applies. In both cases, $(1-c/n)^{n-1}>e^{-c}$, so feasibility for all $q\le e^{-c}$ follows.
\end{proof}

Finally, we prove \cref{thm:oneovere}.

\begin{proof}[Proof of \cref{thm:oneovere}]
We repeat the proof of \cref{prop:transfer} with $c = 1$ and $q_c = (1-c/n)^{n-1}$. Namely, suppose for contradiction that there is a profile $v_1,\dots, v_n$ of monotone valuations on $[m]$ which admits no allocation giving every agent her $c$-thinned $q_c$-quantile share. Then, proceeding exactly as in that proof, we obtain $\gamma \in (0,c)$ such that for every active agent $i \in I$,
\[\tau_{q_{\gamma}}^{(\gamma)}(u_i,n) = 1.\]
As before, set
\[
k:=\left\lfloor \frac{\gamma}{n}\bigl(M+M^{2/3}\bigr)\right\rfloor.
\]
The crucial point is that since $\gamma<1$, for all sufficiently large $M$ we have
\[
nk\le \gamma(M+M^{2/3})<M.
\]
Thus \cref{conj:rainbow} applies to $k$-uniform families on $[M]$. 

From this point onward, the proof is identical. There is only one thing to check, namely, that the first term in \cref{conj:rainbow} dominates the second term, i.e., that
\[
\binom{kn-1}{k}
\le
\binom{M}{k}-\binom{M-n+1}{k},
\]
for all sufficiently large $M$.
To see this, note that
\[
\binom{M}{k}-\binom{M-n+1}{k}
\ge
\binom{M-1}{k-1}
=
\frac{k}{M}\binom{M}{k} = \Omega_{\gamma,n}\left(\binom{M}{k}\right),
\]
while
\[
{\binom{kn-1}{k}}
\le
\left(\frac{kn}{M}\right)^k {\binom{M}{k}} = o_{M}\left(\binom{M}{k}\right),
\]
where the final bound uses $kn/M\to\gamma<1$ as $M \to \infty$ and $k \to \infty$ as $M \to \infty$. 

Optimality follows from \cref{prop:upper-bound} with $c = 1$.
\end{proof}

\bibliography{main}
\bibliographystyle{amsplain0}

\end{document}